\pgfplotsset{compat=1.18}
\newcommand{\makeheading}[1]%
        {\hspace*{-\marginparsep minus \marginparwidth}%
         \begin{minipage}[t]{\textwidth}%
                {\large \bfseries #1}\\[-0.15\baselineskip]%
                 \rule{\columnwidth}{1pt}%
         \end{minipage}}
\renewcommand{\i}{\iota}
\newcommand{\K}{\mathbb{K}}
\newcommand{\R}{\mathbb{R}}
\newcommand{\J}{\mathcal{J}}
\renewcommand{\hat}{\widehat}
\renewcommand\i{\mathrm{i}}
\theoremstyle{plain}
\newtheorem{prop}{Proposition}
\newtheorem{lemma}[prop]{Lemma}
\newtheorem{theorem}{Theorem}
\newtheorem{corollary}[theorem]{Corollary}
\newtheorem{teo}[prop]{Theorem}
\theoremstyle{definition}
\newtheorem{rem}[prop]{Remark}
\renewcommand{\phi}{\varphi}
\renewcommand{\epsilon}{\varepsilon}
\def\semicolon{;}
\def\applytolist#1{
    \expandafter\def\csname multi#1\endcsname##1{
        \def\multiack{##1}\ifx\multiack\semicolon
            \def\next{\relax}
        \else
            \csname #1\endcsname{##1}
            \def\next{\csname multi#1\endcsname}
        \fi
        \next}
    \csname multi#1\endcsname}
\def\calc#1{\expandafter\def\csname c#1\endcsname{{\mathcal #1}}}
\def\bbc#1{\expandafter\def\csname bb#1\endcsname{{\mathbb #1}}}
\def\bfc#1{\expandafter\def\csname bf#1\endcsname{{\mathbf #1}}}
\def\sfc#1{\expandafter\def\csname s#1\endcsname{{\sf #1}}}
\def\fc#1{\expandafter\def\csname f#1\endcsname{{\mathfrak #1}}}
 \title{Anosov magnetic flows on surfaces}
 \author{James Marshall Reber and Yumin Shen}
\begin{document}

\begin{abstract}
Using the quotient bundle introduced by Wojtkowski, we give necessary and sufficient conditions for a magnetic flow on a closed, oriented surface to be Anosov. 
\end{abstract}
\maketitle

\section{Introduction} \label{sec:intro}
Let $(M,g)$ be a closed surface, and let $\phi^t : SM \rightarrow SM$ be a smooth flow on the unit tangent bundle. Denoting the infinitesimal generator of the flow $\phi^t$ by $X$ and equipping $SM$ with the Sasaki metric, we recall that the flow $\phi^t$ is \emph{Anosov} if $X$ is nowhere vanishing and there are $\phi^t$-invariant bundles $E^+$ and $E^-$ along with $c,d > 0$ so that $TSM = E^+ \oplus \R X \oplus E^-$ and $\|d_v \phi^{\pm t}(\xi)\| \leq d e^{-ct} \|\xi\|$ for all $\xi \in E^\pm(v)$ and $t \geq 0$.
Such flows can be seen as generalizations of geodesic flows of metrics with everywhere negative sectional curvature \cite{anosov}. In particular, they share many nice dynamical properties with a negatively curved geodesic flow; the flow is mixing, there is a dense orbit, and periodic points are dense.  It is of interest, then, to determine when a flow is Anosov.

For geodesic flows, it is shown in \cite{eberlein, eberlein2, klingenberg1974riemannian} that a geodesic flow is Anosov if and only if the metric is without conjugate points and there does not exist a non-trivial Jacobi field whose perpendicular component is bounded.
In particular, this can be used to show that if the surface $M$ has non-positive curvature, then the geodesic flow is Anosov if and only if every geodesic in $M$ passes through a region with strictly negative curvature. In particular, this corollary was used to construct an example of a metric whose geodesic flow was Anosov and whose curvature had some flat regions. By the structural stability of Anosov flows, this allows for an example of an Anosov geodesic flow whose metric had some regions with positive curvature, allowing for examples of so-called, ``Anosov metrics,'' which are not even non-positively curved.\footnote{This corollary also has been used to construct examples of Anosov flows coming from a physical context, see \cite{HM, kourganoff2016anosov}.}

The goal of this paper is to explore a generalization of the work in \cite{eberlein, eberlein2, klingenberg1974riemannian} to the setting where $M$ is a closed, oriented surface, and the flow $\phi^t$ is a magnetic flow. Recall that a \emph{magnetic system} on $M$ is a pair $(g,b)$, where $g$ is a Riemannian metric and $b$ is a smooth function on $M$. The function $b$ is referred to as the \emph{magnetic intensity}. Since $M$ is oriented, there exists a complex structure $\i$ on $TM$, where $\i v$ is the vector rotated by $\pi/2$ according to the orientation of $M$. A curve $\gamma : \R \rightarrow M$ is a \emph{magnetic geodesic} if it satisfies the second-order differential equation
\begin{equation} \label{eqn:magnetic_defn} \frac{D \dot{\gamma}}{dt}(t) =  b(\gamma(t)) \i \dot{\gamma}(t), \tag{M}\end{equation}
where $D/dt$ is the covariant derivative along $\gamma$ induced by the Levi-Civita connection of $g$. This equation dictates the motion of a charged particle on $M$ under the influence of the magnetic intensity. Writing $\gamma_v$ for the unique magnetic geodesic satisfying $(\gamma_v(0), \dot{\gamma}_v(0)) = v$ with $v \in TM$, we have the \emph{magnetic flow} associated to the magnetic system $(g,b)$ on $TM$, which is given by $\smash{\phi_{g,b}^t}(v) \coloneqq (\gamma_v(t), \dot{\gamma}_v(t))$. 
Note that the magnetic flow associated to the magnetic system $(g,0)$ is the geodesic flow associated to the metric $g$, and as a result the magnetic flow can be seen as a generalization of the geodesic flow. The dynamics of magnetic systems have been studied for many decades; we point the reader to \cite{AS, arnold1346some, G} for some historical context.

An easy calculation using \eqref{eqn:magnetic_defn} shows that $SM$ is an invariant set for the magnetic flow. However, note that the dynamics of the magnetic flow are on $SM$ are not representative for other speeds, since if $\gamma$ is a unit speed magnetic geodesic for the system $(g,b)$ and $\lambda \neq 0$, then the curve $t \mapsto \gamma(\lambda t)$ is a magnetic geodesic with speed $\lambda$ for the system $(g, \lambda b)$, where here we are keeping track of orientation with the speed. Regardless, if we rescale the magnetic intensity appropriately, then we can view the magnetic flow with respect to other speeds as living on the unit tangent bundle. With this in mind, we study the restriction $\smash{\phi_{g,b}^t} : SM \rightarrow SM$.

The \emph{magnetic curvature} associated to the magnetic system $(g,b)$ is the function $\K^{g,b} : SM \rightarrow \R$ given by $\K^{g,b}(x,v) \coloneqq K^g(x) - d_xb(\i v) + b^2(x),$ where $K^g$ is the Gaussian curvature. This was first defined in \cite{paternain1996anosov}, and has since been used as a tool to measure dynamic and geometric information related to the magnetic system.\footnote{For more details on magnetic curvature, we direct the reader to \cite{paternain1996anosov,  wojtkowski, burnspaternain, valerio}.} For example, it is shown in \cite{wojtkowski} that if $\K^{g,b} < 0$, then the magnetic flow is Anosov.\footnote{See \cite{gouda,grognet, IVJ} for related results.} Our interest in the magnetic curvature is how it arises in the Jacobi equations for magnetic systems. 

A \emph{Jacobi field} for the magnetic system $(g,b)$ is the variational vector field along a unit speed magnetic geodesic associated to a variation of unit speed magnetic geodesics. This is a useful tool for understanding the infinitesimal behavior of the magnetic flow, and as a result we will need it in our analysis of Anosov magnetic flows. It was shown in \cite{paternain1996anosov} that if $J$ is a vector field along a magnetic geodesic $\gamma$, then $J$ is a Jacobi field if and only if it satisfies the Jacobi equations:
\begin{align*}
\nonumber 
(J^\top)^{\cdot}(t) &= b(\gamma_v(t)) J^\perp(t), \\  (J^\perp)^{\cdot \cdot}(t) &+ \K^{g,b}(\gamma_v(t), \dot{\gamma}_v(t)) J^\perp(t) = 0, \label{eqn:jacobi_defn} \tag{J}\end{align*}
where $J^\top(t) \coloneqq \langle J(t)$, $\dot{\gamma}_v(t) \rangle$ and $J^\perp(t) \coloneqq \langle J(t), \i \dot{\gamma}_v(t) \rangle$. We note that the perpendicular component of a Jacobi field behaves exactly like the perpendicular component of a Jacobi field in the Riemannian setting, except with the magnetic curvature replacing the Gaussian curvature. If a real valued function $J^\perp$ satisfies \eqref{eqn:jacobi_defn}, then we call it a \emph{perpendicular Jacobi field}.

Similar to the Riemannian setting, one can define a notion of conjugate points in terms of a magnetic exponential. For simplicity of presentation, we reframe this definition to be in terms of perpendicular Jacobi fields; the equivalence of these two concepts follows from \cite[Proposition 2.1.1]{herreros}. Given two points $x, p \in M$, we say that a point $x$ is \emph{conjugate} to $p$ if there is a magnetic geodesic $\gamma : \R \rightarrow M$, a $t \in \R$, and a perpendicular Jacobi field $J$ along $\gamma$ so that $\gamma(0) = p$, $\gamma(t) = x$, and $J^\perp(0) =  0 = J^\perp(t)$. As a result, we say that a magnetic system $(g,b)$ is \emph{without conjugate points} if for all $v \in SM$ and all perpendicular Jacobi fields $J^\perp$ along $\gamma_v$, we have $J^\perp(0) = 0$ implies $J^\perp(t) \neq 0$ for all $t \neq 0$. 

A necessary condition for the geodesic flow to be Anosov is for the metric to be without conjugate points \cite{klingenberg1974riemannian, mane}. If a metric $g$ is without conjugate points, then one can construct $\phi_g^t$-invariant subbundles $E^\pm \subseteq TTM$ which are referred to as the \emph{Green bundles} \cite{green}.\footnote{See also \cite[Proposition A]{contreras}.} One can then interpret the result in \cite{eberlein} as saying that the Green bundles restricted to $TSM$ form an Anosov splitting for the geodesic flow. Thus, our first step in understanding necessary and sufficient conditions for a magnetic flow to be Anosov is to understand how the magnetic flow being Anosov relates to being without conjugate points.

\begin{theorem}\label{thm:conjugate}
    Let $M$ be a closed, oriented surface, and let $(g,b)$ be a magnetic system on $M$. If $\smash{\phi_{g,b}^t} : SM \rightarrow SM$ is Anosov, then $(g,b)$ is without conjugate points.
\end{theorem}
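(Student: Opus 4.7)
The plan is to prove the contrapositive. Assume $(g,b)$ has a conjugate pair and show $\phi_{g,b}^t$ cannot be Anosov. The entire argument is framed in the Wojtkowski quotient bundle $N = TSM/\R X$, a rank-$2$ symplectic vector bundle over $SM$ on which the linearized flow $d\phi_{g,b}^t$ descends to a symplectic linear cocycle. Via the perpendicular component of \eqref{eqn:jacobi_defn}, a class $[\xi]_v \in N(v)$ is identified with the initial data $(J^\perp(0),(J^\perp)'(0))$ of a perpendicular Jacobi field along $\gamma_v$, and the cocycle advances these data in time. The vertical subbundle $V \subset N$ consists of classes with $J^\perp(0) = 0$, and the existence of a conjugate pair at times $0$ and $T$ along $\gamma_v$ translates directly into the equality of $1$-dimensional lines $d\phi^T V(v) = V(\phi^T v)$ inside $N(\phi^T v)$. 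Assuming Anosov, the splitting $TSM = E^+ \oplus \R X \oplus E^-$ descends to a continuous invariant splitting $N = N^+ \oplus N^-$ into hyperbolic $1$-dimensional subbundles.

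The crucial step will be to establish $V(v) \cap N^\pm(v) = \{0\}$ at every $v \in SM$; equivalently, the perpendicular Jacobi fields $j^\pm$ associated with nonzero vectors in $N^\pm(v)$ are nowhere zero along $\gamma_v$. I expect this transversality lemma to be the main obstacle. My plan is to exploit the invariant cone field intrinsic to Wojtkowski's quotient-bundle construction: the vertical line $V(v)$ sits on the boundary of a cone $C(v) \subset N(v)$ that is strictly $d\phi_{g,b}^t$-invariant under the Anosov hypothesis, while the hyperbolic subbundles $N^\pm$ necessarily lie in the interior of $C$ (respectively its dual cone). This interior/boundary dichotomy is incompatible with $V(v) = N^+(v)$ or $V(v) = N^-(v)$ at any point, yielding the transversality lemma.

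Granted the lemma, the conclusion is a short Wronskian/slope computation. Let $j^\pm$ be Jacobi fields along $\gamma_v$ arising from nonzero vectors in $N^\pm(v)$; by the transversality lemma they are nowhere zero. Since $N^+ \cap N^- = \{0\}$, the fields $j^+, j^-$ are linearly independent, so their Wronskian $W = j^+(j^-)' - (j^+)' j^-$ is a nonzero constant. Setting $\rho(t) = j^-(t)/j^+(t)$, the identity $\rho'(t) = W/(j^+(t))^2$ shows that $\rho$ has derivative of constant sign, hence $\rho$ is strictly monotone on $\R$. Any perpendicular Jacobi field is a linear combination $j = \alpha j^+ + \beta j^-$, and a hypothetical conjugate pair $j(0) = j(T) = 0$ would force $\rho(0) = \rho(T) = -\alpha/\beta$, contradicting the strict monotonicity of $\rho$. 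Hence no such $T > 0$ can exist, and $(g,b)$ is without conjugate points.
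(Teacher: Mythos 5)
Your Wronskian finish is clean and correct, and in fact streamlines the paper's argument: the paper first proves a ``first conjugate time'' lemma (a Sturm-type statement), then compares signs of $(J^\perp)^\cdot(0)$ and $(J^\perp)^\cdot(T)$ against the Wronskian with $J^{+,\perp}$, whereas your monotone-slope function $\rho = j^-/j^+$ avoids that lemma entirely and uses only that $j^+$ and $j^-$ never vanish. That part of your plan is a genuine, mild improvement.

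The gap is exactly where you predicted: the transversality lemma $V(v) \cap N^\pm(v) = \{0\}$. Your sketch asserts that ``the vertical line $V(v)$ sits on the boundary of a cone $C(v)$ that is strictly $d\phi^t$-invariant under the Anosov hypothesis,'' but this is not a consequence of the Anosov hypothesis alone, and in fact is essentially equivalent to what you are trying to prove. The Anosov hypothesis gives you strictly invariant cone fields \emph{around $N^+$ and $N^-$} --- these are determined by the dynamics, and there is no a priori reason the geometrically-defined line $V(v)$ should sit on their boundary. Conversely, Wojtkowski's cone field, which does have $V$ on its boundary, is only known to be strictly invariant under a curvature hypothesis (e.g.\ $\K^{g,b} < 0$), not under the Anosov hypothesis. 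The two families of cones are reconciled by the additional convexity (``twisting'') of $V$ along the flow --- a property of the magnetic Jacobi equation, namely that $\left.\frac{d}{dt}\right|_{t=0} \Phi^{-t}V(\phi_{g,b}^t v)$ is transverse to $V(v)$ --- and this is the ingredient you would need to supply. The paper sidesteps this by citing a lemma of Dairbekov--Paternain (their Lemma~4.1), which establishes precisely that for Anosov magnetic/thermostat flows the vertical is transverse to $E^\pm \oplus \R X^b$; that citation is doing the heavy lifting that your cone sketch glosses over. As written, your argument is circular at this step, and you would need either to import a result like Dairbekov--Paternain's or to give a direct argument using the symplectic form on $Q^b$ together with the twist of $V$ (which can be extracted from the fact that $\omega^b(\xi, \dot{\Phi}^0\xi) = \|d\pi(\xi)\|^2$ vanishes exactly on $V$, and is sign-definite transversally).
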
 

Using Theorem \ref{thm:conjugate}, we deduce two well-known conditions that the magnetic system must satisfy in order for the magnetic flow to be Anosov. We first describe a restriction on the magnitude of $b$. If the magnetic flow $\smash{\phi_{g,b}^t} : SM \rightarrow SM$ is Anosov and $\mu$ is the Liouville measure on $SM$, then \cite[Theorem A]{IVJ} and Theorem \ref{thm:conjugate} implies that we have 
\[ \int_{SM} \K^{g,b}(v) d\mu(v) \leq 0,\] with equality if and only if the magnetic curvature is zero. Using Stokes' theorem along with the Gauss-Bonnet theorem, we can rewrite the above as
\[ \int_M b^2(x) d\nu(x) \leq - 2\pi \chi(M),\]
where $\chi(M)$ is the Euler characteristic of $M$ and $\nu$ is the measure coming from the area form on $M$. 

As discussed in \cite{IVJ}, if the magnetic curvature is zero, then either the magnetic flow corresponds to the geodesic flow on a flat torus, or it corresponds to the horocycle flow of a hyperbolic metric. Neither of these flows are Anosov, and thus the inequality must be strict. Alternatively, one can use the structural stability of Anosov flows to deduce that the set of Anosov flows is open, and hence equality cannot occur. In either case, this inequality shows that the average of $b^2$ cannot be too big if the corresponding magnetic flow is Anosov, and we are able to recover \cite[Theorem 1.3]{paternain1996anosov}.

\begin{corollary} \label{cor:restriction1}
    Let $M$ be a closed, oriented surface, and let $(g,b)$ be a magnetic system on $M$. If $\smash{\phi_{g,b}^t} : SM \rightarrow SM$ is Anosov, then 
    \[ \int_M b^2(x) d\nu(x) < - 2\pi \chi(M).\]
    Moreover, if $b$ is not identically zero, then for $\lambda \in \R$ we have that if $\phi_{g,\lambda b}^t : SM \rightarrow SM$ is Anosov, then
    \[ \lambda^2 < \frac{- 2\pi \chi(M)}{\int_M b^2(x) d\nu(x)}.\]
\end{corollary}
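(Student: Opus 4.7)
The plan is to execute the computation sketched in the paragraph preceding the corollary. The starting point is the combination of Theorem \ref{thm:conjugate} with \cite[Theorem A]{IVJ}, which together yield
\[ \int_{SM} \K^{g,b}(v)\, d\mu(v) \leq 0, \]
with equality if and only if $\K^{g,b} \equiv 0$. The only task for the first inequality of the corollary is to rewrite this integral in terms of $b$ and $\chi(M)$. Decomposing the Liouville measure as the product of the arc-length measure on each fiber circle with the area form $d\nu$ on $M$, and expanding via $\K^{g,b}(x,v) = K^g(x) - d_xb(\i v) + b^2(x)$, the $K^g$ term contributes $2\pi \int_M K^g\, d\nu = 4\pi^2 \chi(M)$ by Gauss--Bonnet, the $b^2$ term contributes $2\pi \int_M b^2\, d\nu$, and the middle term vanishes because $v \mapsto d_xb(\i v)$ is a linear functional on $T_xM$ and therefore integrates to zero over the unit circle. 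Dividing by $2\pi$ rearranges to $\int_M b^2\, d\nu \leq -2\pi \chi(M)$.

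To upgrade to a strict inequality, I would argue as follows. Equality forces $\K^{g,b} \equiv 0$; taking the fiber average then forces $d_xb \equiv 0$ and $K^g = -b^2$, so $b$ is a constant and the metric has constant curvature. If $b \equiv 0$, the flow is the geodesic flow of a flat torus, which is not Anosov; otherwise, the metric has constant negative curvature $-b^2$ and the flow is (up to reparametrization and orientation) a horocycle flow on a hyperbolic surface, which is also not Anosov. Either case contradicts the hypothesis that $\phi_{g,b}^t$ is Anosov. Alternatively, the inequality can be made strict via the structural stability of Anosov flows: if $\phi_{g,b}^t$ is Anosov then so is $\phi_{g,\lambda b}^t$ for $\lambda$ near $1$, yielding $\lambda^2 \int_M b^2\, d\nu \leq -2\pi \chi(M)$ for all such $\lambda$; this rules out equality at $\lambda = 1$ as long as $\int_M b^2\, d\nu > 0$, and the degenerate case $b \equiv 0$ is handled separately since the geodesic flow being Anosov forces $\chi(M) < 0$.

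Finally, the moreover statement follows immediately by applying the now-strict inequality to the magnetic system $(g,\lambda b)$: we get $\lambda^2 \int_M b^2\, d\nu < -2\pi \chi(M)$, and since $b$ is not identically zero one can divide by the positive quantity $\int_M b^2\, d\nu$ to obtain the claimed bound on $\lambda^2$. The only substantive ingredient is the correct invocation of \cite[Theorem A]{IVJ} together with the dichotomy between flat tori and horocycle flows in the equality case; everything else is a direct calculation.
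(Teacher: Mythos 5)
Your proposal is correct and follows essentially the same route as the paper: invoke Theorem~\ref{thm:conjugate} to get no conjugate points, then \cite[Theorem A]{IVJ} for the nonpositivity of the integrated magnetic curvature, then Gauss--Bonnet to translate into the stated inequality, and finally the flat-torus/horocycle-flow dichotomy (or structural stability) to upgrade to a strict inequality and specialize to $(g,\lambda b)$. The only cosmetic difference is that you dispose of the $d_xb(\i v)$ term by directly observing a linear functional integrates to zero over the fiber circle, where the paper points to Stokes' theorem; both are fine.
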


\begin{rem}
As observed in \cite[Section 2]{burnspaternain}, the inequality with respect to $\lambda$ is not as sharp as the one given in \cite[Theorem B]{burnspaternain} unless $g$ has constant negative curvature.
\end{rem}

It follow from
\cite{plante1972anosov} that if the unit tangent bundle of a surface admits an Anosov flow, then the fundamental group of the unit tangent bundle must have exponential growth. Since the fundamental groups of the unit tangent bundles of the sphere and torus have polynomial growth, this implies that there cannot be an Anosov magnetic flow on the unit tangent bundle of either. We are able to recover this topological restriction using the strict inequality in Corollary \ref{cor:restriction1} (see also \cite[Corollaries A and B]{IVJ}).

\begin{corollary}
    Let $M$ be a closed, oriented surface. If there exists a magnetic system $(g,b)$ on $M$ so that $\smash{\phi_{g,b}^t} : SM \rightarrow SM$ is Anosov, then the genus of $M$ is at least two.
\end{corollary}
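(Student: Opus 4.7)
The plan is to reduce the topological statement to a sign comparison using the strict integral inequality established in Corollary \ref{cor:restriction1}. Concretely, assume $(g,b)$ is a magnetic system on the closed oriented surface $M$ such that $\smash{\phi_{g,b}^t}$ is Anosov. Then Corollary \ref{cor:restriction1} gives
\[
\int_M b^2(x)\, d\nu(x) < -2\pi\chi(M).
\]
Since $b^2 \geq 0$ pointwise and $\nu$ is a positive measure, the left-hand side is nonnegative, so we immediately get $-2\pi\chi(M) > 0$, i.e. $\chi(M) < 0$.

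The second step is purely topological: for a closed oriented surface of genus $\mathrm{g}$, we have $\chi(M) = 2 - 2\mathrm{g}$, so $\chi(M) < 0$ forces $\mathrm{g} \geq 2$. This handles all three cases at once: the sphere ($\chi = 2$) and the torus ($\chi = 0$) both violate the strict inequality, while every higher-genus surface is consistent with it.

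There is essentially no obstacle here, since the strict inequality in Corollary \ref{cor:restriction1} already carries all the analytic content; the only nontrivial input is that the strictness cannot be relaxed, which is exactly what rules out the torus case. The corollary is therefore a one-line consequence of Corollary \ref{cor:restriction1} once one notes the nonnegativity of $b^2$ and the classification of closed oriented surfaces by genus.
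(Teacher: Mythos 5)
Your proof is correct and follows exactly the approach the paper intends: the text preceding the corollary explicitly states that the topological restriction is recovered from the strict inequality in Corollary \ref{cor:restriction1}, together with the nonnegativity of $b^2$ and the formula $\chi(M) = 2 - 2\mathrm{g}$.
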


Using the examples constructed in \cite[Section 7]{burnspaternain}, we observe that neither of the above conditions are sufficient for the magnetic flow to be Anosov. In order to find sufficient conditions, as well as prove Theorem \ref{thm:conjugate}, we introduce the quotient bundle and the quotient flow described in \cite{IVJ} (see also \cite[Section 2]{contreras}, \cite[Section 5]{wojtkowski}, and \cite[Section 5]{merry}). These will be our main tools for understanding Anosov magnetic flows.

Let $X^b$ be the infinitesimal generator of the flow associated to $\smash{\phi_{g,b}^t}$ on $SM$. The \emph{quotient bundle} over $SM$ associated to the magnetic system $(g,b)$ is the bundle with fibers given by $Q^b(v) \coloneqq T_vSM/ \R X^b(v)$. We equip it with the quotient norm coming from the Sasaki metric.
The differential of the magnetic flow 
induces a flow on the quotient bundle, which we refer to as the \emph{quotient flow} and denote by $\Phi^t : Q^b \rightarrow Q^b$. 
We say that the quotient flow is \emph{Anosov} if  there is a $\Phi^t$-invariant splitting $Q^b = \hat{E}^{+} \oplus \hat{E}^{-}$ and constants $c, d > 0$ so that for all $v \in SM$ we have
$\| \Phi_{v}^{\pm t}(\xi)\| \leq d \smash{e^{-ct}} \Vert\xi\Vert \text{ for all } t \geq 0 \text{ and } \xi \in \smash{\hat{E}^{\pm}(v)}.$

We will show in Section \ref{sec:stableunstable} that if $(g,b)$ is without conjugate points, then, as in the geodesic setting, there are two $\Phi^t$-invariant subbundles $\hat{E}^\pm \subseteq Q^b$, which we will refer to as the \emph{Green bundles} (see also \cite[Lemma 4.4]{IVJ} and \cite[Corollary 2.3 (b)]{contreras}). Moreover, as shown in \cite[Appendix A]{IVJ}, these bundles are the candidates for the Anosov splitting of $\Phi^t$ in the case where the magnetic curvature is everywhere strictly negative. Our next result shows that this is always the case (compare with \cite[Theorem 3.6]{eberlein} and \cite[Theorem C]{contreras}).

\begin{theorem}\label{thm:main}Let $M$ be a closed, oriented surface, let $(g,b)$ be a magnetic system without conjugate points, and let $\hat{E}^\pm \subseteq Q^b$ be the Green bundles described above. The following are equivalent.
\begin{enumerate}[(1).]
\item \label{thm:1} We have that $\Phi^t$ is Anosov, and the Green bundles form an Anosov splitting.
\item \label{thm:2} For every $v \in SM$, we have $\hat{E}^{+}(v)\cap \hat{E}^{-}(v)=\{0\}$. 
\item \label{thm:3} There does not exist a non-trivial bounded perpendicular Jacobi field.
\end{enumerate}
\end{theorem}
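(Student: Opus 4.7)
The plan is to establish the cycle (\ref{thm:1}) $\Rightarrow$ (\ref{thm:2}) $\Rightarrow$ (\ref{thm:3}) $\Rightarrow$ (\ref{thm:1}), building on the Green bundle construction of Section \ref{sec:stableunstable}. The implication (\ref{thm:1}) $\Rightarrow$ (\ref{thm:2}) is immediate, since an Anosov splitting is in particular a direct sum decomposition, so $\hat{E}^+(v) \cap \hat{E}^-(v) = \{0\}$ for every $v \in SM$.

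For the equivalence (\ref{thm:2}) $\Leftrightarrow$ (\ref{thm:3}), I would use the description of $\hat{E}^\pm(v)$ as limits, as $T \to \infty$, of the lines in $Q^b(v)$ corresponding to the perpendicular Jacobi fields along $\gamma_v$ that vanish at time $\pm T$. A nonzero element of $\hat{E}^+(v) \cap \hat{E}^-(v)$ corresponds via \eqref{eqn:jacobi_defn} to a non-trivial perpendicular Jacobi field $J^\perp$ that, on each half-line, is a limit of fields dominated by the Sturm-type comparison inherent to \eqref{eqn:jacobi_defn} by perpendicular Jacobi fields vanishing at $\pm T$; this forces $J^\perp$ to be uniformly bounded on $\R$, so (\ref{thm:3}) fails. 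Conversely, a non-trivial bounded perpendicular Jacobi field yields a nonzero vector that, by the same domination argument, lies simultaneously in $\hat{E}^+(v)$ and $\hat{E}^-(v)$, contradicting (\ref{thm:2}).

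The main obstacle is the implication (\ref{thm:3}) $\Rightarrow$ (\ref{thm:1}), where transversality of the Green bundles must be upgraded to uniform hyperbolicity. From the equivalence just established, the Green bundles form a continuous $\Phi^t$-invariant direct sum decomposition of $Q^b$, so what remains is to produce uniform exponential contraction on $\hat{E}^+$ and, symmetrically via time-reversal, on $\hat{E}^-$. I would argue by contradiction: if uniform exponential contraction on $\hat{E}^+$ failed, then one can extract sequences $v_n \in SM$ and $t_n \to +\infty$ along with unit vectors $\xi_n \in \hat{E}^+(v_n)$ for which $\|\Phi^{t_n}(\xi_n)\|$ does not tend to zero exponentially. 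Translating back to perpendicular Jacobi fields via \eqref{eqn:jacobi_defn}, recentering the base point at an intermediate time $s_n \in [0, t_n]$ chosen so that the associated field attains a definite fraction of its $L^\infty$-norm on $[0,t_n]$, and invoking compactness of $SM$ together with continuous dependence of solutions of \eqref{eqn:jacobi_defn} on the base point, I would extract a limit perpendicular Jacobi field $J_\infty$ along a limiting magnetic geodesic. The choice of renormalization forces $J_\infty$ to be both non-trivial and uniformly bounded on all of $\R$, which contradicts (\ref{thm:3}). The hardest part will be guaranteeing that the limiting field is genuinely non-trivial, which is where the precise recentering at $s_n$ and uniform-in-$n$ bounds on the Jacobi equation play the decisive role.
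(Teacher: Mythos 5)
Your overall plan is the right one and matches the spirit of the paper's argument (and of Eberlein's original strategy): characterize the Green bundles in terms of bounded orbits, then use compactness of $SM$ and a renormalization argument to extract a bounded Jacobi field from failure of hyperbolicity. The implications \eqref{thm:1} $\Rightarrow$ \eqref{thm:2} and the equivalence \eqref{thm:2} $\Leftrightarrow$ \eqref{thm:3} are in line with Propositions \ref{prop:crt1} and \ref{prop:crt4}.

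However, your sketch of \eqref{thm:3} $\Rightarrow$ \eqref{thm:1} has a genuine gap. You negate ``uniform exponential contraction'' and claim to extract $t_n \to +\infty$ and unit vectors $\xi_n \in \hat{E}^+(v_n)$ for which $\|\Phi^{t_n}(\xi_n)\|$ ``does not tend to zero exponentially.'' The negation of exponential decay does not supply a subsequence bounded away from zero: it only says that for every $c,d>0$ there is some $t,\xi$ with $\|\Phi^t(\xi)\| > de^{-ct}\|\xi\|$, which is compatible with $\|\Phi^{t_n}(\xi_n)\|\to 0$ at a subexponential rate. Without a lower bound $\|\Phi^{t_n}(\xi_n)\|\geq \epsilon$, your compactness argument produces a limit $\psi$ with $\|\psi\|=0$, i.e.\ the trivial Jacobi field, and no contradiction. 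The paper avoids this by splitting the step in two: Proposition \ref{prop:crt5} negates only \emph{uniform} (non-exponential) contraction, which does give $\|\Phi^{t_n}(\xi_n)\|\geq \epsilon$ and hence a non-trivial bounded limit field; and Proposition \ref{prop:crt6} then upgrades this uniform contraction to exponential decay via the submultiplicativity $h(s+t)\leq h(s)h(t)$ of $h(t) \coloneqq \sup\{\|\Phi^t\xi\|:\xi\in\hat{E}^+,\ \|\xi\|=1\}$. This second, purely abstract step is essential and is missing from your proposal. Relatedly, the recentering at a time $s_n$ where the field attains a definite fraction of its sup is indeed used in the paper, but its role is in Proposition \ref{prop:crt3} (to establish the uniform lower bound $|J^\perp(t)|\geq A|J^\perp(s)|$ for fields vanishing at the origin, which feeds into Proposition \ref{prop:crt4}), not directly in establishing exponential contraction. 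You would do well to reproduce the paper's layering: \ref{prop:crt3} and \ref{prop:crt4} to identify $\hat{E}^\pm$ with bounded-orbit classes, then \ref{prop:crt5} for uniform contraction, then \ref{prop:crt6} for the exponential rate.
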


We finish by observing two corollaries of Theorem \ref{thm:main}. First, \cite[Proposition 5.1]{wojtkowski} tells us that $\Phi^t$ is Anosov if and only if $\phi_{g,b}^t$ is Anosov. As a consequence of this and Theorem \ref{thm:main}, we are able to recover the following (see also \cite[Theorem C]{contreras}).

\begin{corollary} \label{cor:main}
        Let $M$ be a closed, oriented surface, and let $(g,b)$ be a magnetic system without conjugate points. The magnetic flow $\smash{\phi_{g,b}^t} : SM \rightarrow SM$ is Anosov if and only if there does not exist a non-trivial bounded perpendicular Jacobi field.
\end{corollary}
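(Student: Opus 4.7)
The plan is to deduce the corollary almost immediately from Theorem \ref{thm:main} by using Wojtkowski's result \cite[Proposition 5.1]{wojtkowski}, which establishes that the quotient flow $\Phi^t$ on $Q^b$ is Anosov if and only if the magnetic flow $\phi_{g,b}^t$ on $SM$ is Anosov. Since the magnetic system is assumed to be without conjugate points, the Green bundles $\hat{E}^{\pm} \subseteq Q^b$ from Section \ref{sec:stableunstable} are well-defined, so Theorem \ref{thm:main} applies and provides the equivalence of its three conditions.

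For the forward direction, I would assume that $\phi_{g,b}^t$ is Anosov. Applying \cite[Proposition 5.1]{wojtkowski}, the quotient flow $\Phi^t$ is Anosov. Condition (\ref{thm:1}) of Theorem \ref{thm:main} then holds, so by the implication (\ref{thm:1}) $\Rightarrow$ (\ref{thm:3}) there does not exist a non-trivial bounded perpendicular Jacobi field. For the reverse direction, I would assume that no non-trivial bounded perpendicular Jacobi field exists, invoke the implication (\ref{thm:3}) $\Rightarrow$ (\ref{thm:1}) of Theorem \ref{thm:main} to conclude that $\Phi^t$ is Anosov (with the Green bundles as splitting), and then apply \cite[Proposition 5.1]{wojtkowski} in the other direction to obtain that $\phi_{g,b}^t$ is Anosov.

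There isn't really a serious obstacle here; the content of the corollary is essentially the translation of condition (\ref{thm:3}) of Theorem \ref{thm:main} from the quotient bundle $Q^b$ back to the original unit tangent bundle $SM$, which is exactly what Wojtkowski's equivalence delivers. The only small thing to be careful about is that the hypothesis of being without conjugate points is used solely to define the Green bundles and apply Theorem \ref{thm:main}; it is not needed for the Wojtkowski equivalence.
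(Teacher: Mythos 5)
Your proposal matches the paper's argument exactly: the corollary is stated as an immediate consequence of combining Wojtkowski's equivalence (\cite[Proposition 5.1]{wojtkowski}, that $\Phi^t$ is Anosov iff $\phi_{g,b}^t$ is Anosov) with the equivalence (\ref{thm:1}) $\Leftrightarrow$ (\ref{thm:3}) in Theorem \ref{thm:main}. Your handling of the two directions and your observation about the role of the no-conjugate-points hypothesis are both correct.
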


Using Theorem \ref{thm:main} and Corollary \ref{cor:main}, we then establish the following magnetic version of \cite[Corollary 3.6]{eberlein}, which further highlights the importance of magnetic curvature for the dynamics of the magnetic system.

\begin{corollary} \label{cor:main2}
    Let $M$ be a closed, oriented surface, and let $(g,b)$ be a magnetic system such that $\K^{g,b} \leq 0$. The magnetic flow $\smash{\phi_{g,b}^t} : SM \rightarrow SM$ is Anosov if and only if for every $v \in SM$, \hbox{$\{t \in \R \ | \ K^{g,b}(\gamma_v(t), \dot{\gamma}_v(t)) < 0\} \neq \varnothing$}.
\end{corollary}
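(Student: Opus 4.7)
The plan is to reduce to Corollary \ref{cor:main}, so the first step is to verify its hypothesis: that $\K^{g,b} \leq 0$ already implies $(g,b)$ is without conjugate points. This is a Sturm-type comparison applied to the scalar ODE $(J^\perp)^{\cdot\cdot} = -\K^{g,b}(\gamma_v(t),\dot{\gamma}_v(t))\, J^\perp$. If $J^\perp(0) = 0$ and $(J^\perp)^{\cdot}(0) > 0$, then wherever $J^\perp \geq 0$ one has $(J^\perp)^{\cdot\cdot} \geq 0$, so $(J^\perp)^{\cdot}$ is non-decreasing and remains $\geq (J^\perp)^{\cdot}(0) > 0$. Hence $J^\perp$ is strictly increasing for $t > 0$ and never returns to zero, so no conjugate points exist.

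With Corollary \ref{cor:main} now applicable, it suffices to show that the condition ``every magnetic geodesic passes through a point where $\K^{g,b} < 0$'' is equivalent to ``there is no non-trivial bounded perpendicular Jacobi field.'' One direction is immediate: if $\K^{g,b}$ vanishes identically along some $\gamma_v$, then $J^\perp \equiv 1$ solves the perpendicular Jacobi equation along $\gamma_v$ and is a non-trivial bounded perpendicular Jacobi field; this is the contrapositive of the implication ``Anosov $\Rightarrow$ every orbit sees negative curvature.''

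For the converse, I would assume the curvature condition and suppose for contradiction that $J^\perp$ is a non-trivial bounded perpendicular Jacobi field along some $\gamma_v$, splitting into two cases. If $J^\perp$ has a zero, then after reparameterizing to place the zero at $t=0$ and possibly replacing $J^\perp$ by $-J^\perp$ so that $(J^\perp)^{\cdot}(0) > 0$, the Sturm argument from the first step shows $J^\perp$ stays positive and strictly increasing on $[0,\infty)$ with $(J^\perp)^{\cdot}(t) \geq (J^\perp)^{\cdot}(0)$, giving $J^\perp(t) \geq (J^\perp)^{\cdot}(0)\, t \to \infty$, contradicting boundedness. If $J^\perp$ has no zero, then without loss of generality $J^\perp > 0$ everywhere, so $(J^\perp)^{\cdot\cdot} = -\K^{g,b} J^\perp \geq 0$ on all of $\R$, making $J^\perp$ a positive bounded convex function on $\R$; any such function is necessarily constant, since a non-zero value of $(J^\perp)^{\cdot}$ at any point would, by monotonicity of $(J^\perp)^{\cdot}$, force linear divergence of $J^\perp$ in one direction. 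Hence $(J^\perp)^{\cdot\cdot} \equiv 0$, which with $J^\perp > 0$ forces $\K^{g,b} \equiv 0$ along $\gamma_v$, contradicting the curvature hypothesis.

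I do not anticipate a real obstacle here: every ingredient reduces to a standard ODE convexity or comparison argument applied to the perpendicular Jacobi equation, and the substantive dynamical content has already been absorbed into Corollary \ref{cor:main}. Corollary \ref{cor:main2} is essentially a specialization of Corollary \ref{cor:main} that trades the analytic condition on bounded Jacobi fields for a geometric condition on the curvature along individual orbits, via the rigidity of convex functions on $\R$ in the presence of a sign constraint on $\K^{g,b}$.
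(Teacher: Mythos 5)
Your proof is correct, and it takes a genuinely more elementary route than the paper. The paper factors Corollary~\ref{cor:main2} through the more general intermediate statement Corollary~\ref{cor:main3}, whose hypothesis (that $|J^{\perp,z}(t)|$ is non-decreasing in $t>0$ along every orbit) is weaker than $\K^{g,b}\le 0$; the proof of Corollary~\ref{cor:main3} then leans on Proposition~\ref{prop:crt4} (via Remark~\ref{rem:crt3.5}) and the Green bundle machinery to show that a bounded $J^\perp$ lies in $\hat E^+\cap\hat E^-$, whence $|J^\perp|$ is both non-increasing and, after a time shift, non-decreasing, hence constant. Your version bypasses all of that: once Corollary~\ref{cor:main} is in hand, everything reduces to scalar ODE facts --- $\K^{g,b}\le 0$ makes $|J^\perp|$ convex, a bounded convex function on $\R$ is constant, and a perpendicular Jacobi field vanishing at some time must diverge linearly on at least one side. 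This is cleaner and self-contained for the corollary as stated, at the cost of not giving the more general Corollary~\ref{cor:main3}. Your argument for the case where $J^\perp$ has no zero, and the identification of $\K^{g,b}\equiv 0$ along the offending orbit as the precise obstruction, match the paper's conclusion.

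Two small expository points. First, the opening Sturm step as written only addresses $t>0$; to conclude ``no conjugate points'' you should either note the parallel estimate for $t<0$ (there $J^\perp<0$, so $(J^\perp)^{\cdot\cdot}\le 0$ and $(J^\perp)^{\cdot}(t)\ge(J^\perp)^{\cdot}(0)>0$, giving $J^\perp(t)\le (J^\perp)^{\cdot}(0)\,t$), or phrase the argument in terms of convexity of $|J^\perp|$, which handles both sides at once; alternatively one may invoke the flip correspondence of Lemma~\ref{lem:perpjacobflip} together with the identity $\K^{g,-b}(x,-v)=\K^{g,b}(x,v)$. Second, in the case where $J^\perp$ has a zero at a reparametrized $t=0$, it is worth noting explicitly that non-triviality forces $(J^\perp)^{\cdot}(0)\neq 0$, so that after a sign change your inequality $J^\perp(t)\ge(J^\perp)^{\cdot}(0)\,t$ indeed gives unboundedness. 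Neither of these affects the correctness of the overall argument.
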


The organization of the paper is as follows. In Section \ref{sec:twistedconnector}, we discuss the construction of the quotient bundle in the setting where $M$ is a surface. In particular, we observe a correspondence between perpendicular Jacobi fields and the quotient bundle. In Section \ref{sec:conjugateproof}, we prove Theorem \ref{thm:conjugate} using the quotient bundle. In Section \ref{sec:stableunstable}, we recover \cite[Lemma 4.4]{IVJ} in the surface case, showing that if the system is without conjugate points, then we have the Green bundles described in \cite{eberlein}. In Section \ref{sec:comparisons}, we recall some general comparison theory for abstract Riccati equations. Finally, in Sections \ref{sec:stability} and \ref{sec:main}, we prove Theorem \ref{thm:main} and Corollary \ref{cor:main2} following the techniques in \cite{eberlein} and \cite{hao}. 

\subsection*{Acknowledgments}
We would like to thank Andrey Gogolev and Valerio Assenza for reading an early draft of the paper. We would like to thank Ivo Terek for his comments and suggestions throughout. We would also like to thank the OSU Cycle program for providing this undergraduate research opportunity.

\section{The quotient bundle}  \label{sec:twistedconnector}

For the remainder of the paper, let $M$ be a closed, oriented surface, and let $(g,b)$ be a magnetic system on $M$. The goal of this section is to briefly review the construction of the quotient bundle from \cite{IVJ} in the setting of a surface and explain how it is connected to the perpendicular components of Jacobi fields. 

Let $\xi \in T_vSM$, and consider the curve adapted to $\xi$ given by $t\mapsto(\alpha_{\xi}(t),\beta_{\xi}(t))$, where $\alpha_\xi(t)$ is a curve on the manifold and $\beta_{\xi}(t)$ is a vector field along $\alpha_\xi(t)$.
The (Levi-Civita) \emph{connector} is the map $K_{v} : T_{v} TM \rightarrow T_{\pi(v)}M,$ given by $\smash{K_{v}(\xi)} \coloneqq \smash{\nabla_{\dot{\alpha}_\xi(0)} \beta_\xi(0)}.$
Using the horizontal lift, it is easy to see that $T_vTM = \ker(K_v) \oplus \ker(d_v\pi)$, the restriction $K_v : \ker(d_v\pi) \rightarrow T_{\pi(v)}M$ is an isomorphism, and the restriction $d_v\pi : \ker(K_v) \rightarrow T_{\pi(v)}M$ is an isomorphism (see \cite[Lemma 1.15]{paternain2012geodesic}). 

Notice that we have $T_vSM = \{\xi \in T_vTM \ | \ \langle K_v(\xi), v \rangle = 0\}$ \cite[Exercise 1.28]{paternain2012geodesic}. 
If we quotient the fibers of $T_vSM$ by $X^b(v)$, then we get the quotient bundle described in Section \ref{sec:intro}. Utilizing the isomorphism $\xi + \R X^b(v) \mapsto \xi - \langle d_v\pi(\xi), v \rangle X^b(v)$, we see that\ without loss of generality we can identify 
\begin{equation} \label{eqn:quotient} \tag{Q} Q^b(v) = \{\xi \in T_vTM \ | \ \langle K_v(\xi), v \rangle = 0 = \langle d_v\pi(\xi), v \rangle \}.\end{equation}
The \emph{Sasaki metric} on $TM$ is given by $\|\xi\|^2 = \|K_v(\xi)\|^2 + \|d_v\pi(\xi)\|^2.$
Note that this induces norms on $TSM$ and $Q^b(v)$. Since $M$ is compact, we observe that any norm is equivalent to the quotient norm on $Q^b(v)$ coming from $TSM$. In particular, we can prove Theorem \ref{thm:main} using the norm induced by the Sasaki metric on $Q^b(v)$, as changing norm only changes the constant $d$.

Given $v \in SM$, we denote the space of Jacobi fields along $\gamma_v$ by $\J^b(v)$. Since Jacobi fields are given by a second order differential equation, we see that the footprint map and the connector give rise to an isomorphism between $\J^b(v)$ and $T_vSM$ via $(J(0), \dot{J}(0)) = (d_v\pi(\xi), K_v(\xi))$. Under this identification, we see that we have the following.

\begin{lemma} \label{lem:correspondence}
    We have that $Q^b(v) \cong \J^{b,0}(v) \coloneqq \{ J \in \J^b(v) \ | \ J^\top(0) = 0\}$.
\end{lemma}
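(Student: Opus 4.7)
The plan is to trace through the two identifications already in place and verify that they align. On one hand, equation \eqref{eqn:quotient} realizes $Q^b(v)$ concretely as the subspace of $T_vSM$ consisting of those $\xi$ satisfying $\langle d_v\pi(\xi), v\rangle = 0$ in addition to the defining relation $\langle K_v(\xi), v\rangle = 0$ of $T_vSM$. On the other hand, the paragraph preceding the statement provides an isomorphism $T_vSM \cong \J^b(v)$ via $\xi \mapsto J_\xi$, where $J_\xi$ is the unique Jacobi field along $\gamma_v$ with $(J_\xi(0), \dot{J}_\xi(0)) = (d_v\pi(\xi), K_v(\xi))$.

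The key step will be to observe that the extra condition cutting $Q^b(v)$ out of $T_vSM$ translates exactly to the condition $J^\top(0) = 0$ defining $\J^{b,0}(v)$. Indeed, by definition of $J^\top$ and since $\dot{\gamma}_v(0) = v$, one has
\[ J_\xi^\top(0) = \langle J_\xi(0), \dot{\gamma}_v(0)\rangle = \langle d_v\pi(\xi), v\rangle. \]
Hence $\xi \in Q^b(v)$ if and only if $J_\xi \in \J^{b,0}(v)$, so restricting the established isomorphism $T_vSM \to \J^b(v)$ to $Q^b(v)$ will yield the claimed $Q^b(v) \cong \J^{b,0}(v)$.

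I do not anticipate a serious obstacle; the only check worth recording is a consistent dimension count. Both $Q^b(v)$ and $\J^{b,0}(v)$ should be two-dimensional: $T_vSM$ has dimension three and we cut by one additional linear condition, while a magnetic Jacobi field is determined by the triple $(J^\top(0), J^\perp(0), (J^\perp)^\cdot(0))$ (the first equation in \eqref{eqn:jacobi_defn} is only first-order in $J^\top$), so imposing $J^\top(0) = 0$ removes exactly one parameter. This confirms that the restricted map is an isomorphism rather than merely an injection, completing the argument.
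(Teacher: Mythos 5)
Your proof is correct and fills in precisely the argument the paper leaves implicit: the paper sets up the isomorphism $T_vSM \cong \J^b(v)$ and the identification \eqref{eqn:quotient}, then states the lemma without further proof, relying on exactly the observation $J_\xi^\top(0) = \langle d_v\pi(\xi), v\rangle$ that you spell out. The dimension check is a sensible sanity check, though unnecessary once one notes that you are restricting a linear isomorphism to a subspace cut out by the same linear condition on both sides.
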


For the remainder of the paper, we write $J_\xi$ for the Jacobi field in $\J^{b,0}(v)$ identified with $\xi \in Q^b(v)$. Notice that $d_v\pi(\xi) = J_\xi^\perp(0) \i v$, while $K_v(\xi) = \smash{\frac{D J_\xi}{dt}}(0) = [(J_\xi^\perp)^{\cdot}(0) + b(\pi(v)) J^\top(0)] \i v = (J_\xi^\perp)^{\cdot}(0) \i v$, where here we are using the fact that $J_\xi \in \J^{b,0}(v)$. We also have $\smash{d_{\phi_{g,b}^t(v)}\pi} \circ \Phi_v^t(\xi) = J_\xi^\perp(t)\i v$ and $\smash{K_{\phi_{g,b}^t(v)}} \circ \Phi_v^t(\xi) = (J_\xi^\perp)^{\cdot}(t)\i v$. Thus, the quotient bundle allows us to analyze the quotient flow in terms of perpendicular Jacobi fields.

To finish, we note that we can introduce a symplectic form on $Q^b(v)$ as follows. Let $J_\xi, J_\eta \in \J^{b,0}(v)$, and consider the Wronskian of the perpendicular Jacobi fields:
\begin{equation} \label{eqn:wronskian} \tag{W} W(J_\xi^\perp, J_\eta^\perp) \coloneqq (J_\xi^\perp)^{\cdot}(0) J_\eta^\perp(0) - (J_\eta^\perp)^{\cdot}(0) J_\xi^\perp(0).  \end{equation}
Since these satisfy \eqref{eqn:jacobi_defn}, it is easy to see that we can replace $0$ with $t$ in \eqref{eqn:wronskian}. Furthermore, this defines a symplectic form $\omega^b$ on $Q^b$ by setting $\omega^b_v(\xi, \eta) \coloneqq W(J_\xi^\perp, J_\eta^\perp)$ (see also \cite[Equation (4.1)]{IVJ}).  

To help with notation, we will omit the subscript $v$ in the connector, the differential of the footprint map, and the quotient flow when it is clear from context.

\section{Proof of Theorem \ref{thm:conjugate}}

\label{sec:conjugateproof}

Assume now that $(g,b)$ is a magnetic system on $M$ such that $\smash{\phi_{g,b}^t} : SM \rightarrow SM$ is Anosov. The goal of this section is to establish that $(g,b)$ is without conjugate points. 

As noted in Section \ref{sec:intro}, we have that there are $\Phi^t$-invariant subbundles $\hat{E}^\pm \subseteq Q^b$. Moreover, the dimension of $\hat{E}^+$ is one, so it is clearly Lagrangian with respect to the symplectic form $\omega^b$. Let 
\[\hat{V}(v) \coloneqq \{\xi \in T_vTM \ | \ d_v\pi(\xi) = 0 = \langle K_v(\xi), v \rangle\} = \ker(d_v\pi|_{Q^b(v)}).\]
The following lemma tells us that there is a smallest time at which a conjugate point can occur, and moreover this time is realized.

\begin{lemma} \label{lem:conjugate}
    Let $\gamma_v$ be a magnetic geodesic, and suppose that there is a $t_0 >0 $ and a perpendicular Jacobi field $J$ along $\gamma_v$ such that $J(0) = 0$ and $J(t_0) = 0$. Then there exists a $T(v) = T > 0$ so that for every perpendicular Jacobi field $J$ along $\gamma_v$ with $J(0) = 0$, we have $J(t) \neq 0$ for $t \in (0,T)$ and $J(T) = 0$. 
\end{lemma}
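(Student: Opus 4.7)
The plan is to exploit the fact that the perpendicular Jacobi equation \eqref{eqn:jacobi_defn} is a scalar second-order linear ODE, so the space of perpendicular Jacobi fields along $\gamma_v$ is two-dimensional and the subspace of those with $J^\perp(0) = 0$ is one-dimensional. Any two nontrivial perpendicular Jacobi fields vanishing at $0$ are therefore nonzero scalar multiples of one another, and in particular share the same zero set on $(0, \infty)$. This reduces the lemma to locating the smallest positive zero of a single representative solution.

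Concretely, I would let $f$ denote the unique solution of
\[ \ddot f(t) + \K^{g,b}(\gamma_v(t), \dot\gamma_v(t))\, f(t) = 0, \qquad f(0) = 0, \quad \dot f(0) = 1. \]
The hypothesis supplies some nontrivial perpendicular Jacobi field vanishing at both $0$ and $t_0$; this field must be a nonzero multiple of $f$, so $f(t_0) = 0$ and the set $Z \coloneqq \{t > 0 : f(t) = 0\}$ is nonempty. Since $\dot f(0) = 1$, continuity gives an interval $(0, \epsilon)$ on which $f$ does not vanish, hence $Z \subseteq [\epsilon, \infty)$. Next, standard uniqueness for linear ODEs implies that if $f(t_1) = \dot f(t_1) = 0$ for some $t_1$, then $f \equiv 0$, contradicting $\dot f(0) = 1$; therefore $\dot f$ is nonzero at every zero of $f$, and the zeros of $f$ are isolated. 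It follows that $Z \cap [\epsilon, t_0]$ is finite, so $T \coloneqq \min Z$ exists and lies in $(0, t_0]$.

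By construction $f(T) = 0$ and $f(t) \neq 0$ for $t \in (0,T)$. Any perpendicular Jacobi field $J$ along $\gamma_v$ with $J(0) = 0$ equals $\lambda f$ for some $\lambda \in \R$, and when $\lambda \neq 0$ it inherits these zero-set properties. There is really no serious obstacle here: the entire argument rests on the one-line observation that $J^\perp$ solves a scalar second-order linear ODE, collapsing the problem to one about zeros of a single function. The only mildly technical point worth isolating is that the infimum defining $T$ is attained, which is guaranteed by the isolation of zeros coming from ODE uniqueness.
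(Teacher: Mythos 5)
Your proposal is correct and follows essentially the same approach as the paper: both arguments rest on the observation that the perpendicular Jacobi equation is a scalar second-order linear ODE, so the space of solutions vanishing at $0$ is one-dimensional, which reduces the lemma to locating the first positive zero of a single function. The paper dresses this up in the language of the quotient bundle (comparing $E(t) \coloneqq \Phi^{-t}(\hat V(\phi_{g,b}^t(v)))$ against $\hat V(v)$) and shows $T>0$ via a mean value theorem argument, whereas you argue more directly from $\dot f(0)=1$ and isolation of zeros, but these are minor presentational differences over the same underlying idea.
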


\begin{proof}
    We work with the case where $t_0 > 0$.
    Define the subspace $E(t) \coloneqq \Phi^{-t}(\hat{V}(\phi_{g,b}^t(v))),$ and define \hbox{$T \coloneqq \inf\{ t > 0 \ | \ E(t) \cap \hat{V}(v) \neq 0 \}.$}
    Since these subspaces are one-dimensional, we have that $E(t) \cap \hat{V}(v) \neq 0$ if and only if $E(t) = \hat{V}(v)$. In particular, this means that if $J$ is a perpendicular Jacobi field with $J(0) = 0$, and $E(t) \cap \hat{V}(v) \neq 0$, then $J(t) = 
    0$. 
    
    We first show that $T > 0$. Suppose for contradiction that $T = 0$. Let $(t_n)$ be a sequence such that $t_n \rightarrow 0$ as $n \rightarrow \infty$ and $E(t_n) = \hat{V}(v)$ for every $n$. In particular, taking a perpendicular Jacobi field $J$ with $J(0) = 0$, we have that $J(t_n) = 0$ for every $n$. By the mean value theorem, there is an $s_n \in (0, t_n)$ so that $\dot{J}(s_n) = 0$. Taking the limit as $n$ tends to infinity, we see that $\dot{J}(0) = 0$, and hence $J \equiv 0$, contradicting the fact that $\hat{V}(v) \neq 0$. 

    Next, let $t_n \rightarrow T$ be such that $E(t_n) = \hat{V}(v)$. If $J$ is a perpendicular Jacobi field with $J(0) = 0$, then $J(t_n) = 0$ for every $n$. Using the fact that $J$ is continuous, we deduce that $J(T) = 0$, hence $E(T) = \hat{V}(v)$. Furthermore, since $T$ is an infimum, we see that $J(t) \neq 0$ for $t \in (0,T)$. 
\end{proof}

Next, we recall that if the flow associated to $(g,b)$ is Anosov, then $\hat{E}^+(v)$ never intersects $\hat{V}(v)$.

\begin{lemma} \label{lem:conjugate2}
   If $\smash{\phi_{g,b}^t} : SM \rightarrow SM$ is Anosov, then $\hat{E}^+(v) \cap \hat{V}(v) = 0$ for all $v \in SM$. In other words, for every perpendicular Jacobi field $J^{\perp, +}$ corresponding to a vector in $\smash{\hat{E}^+(v)}$, we have $J^{\perp, +}(t) \neq 0$ for all $t \in \R$.
\end{lemma}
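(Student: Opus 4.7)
The plan is to argue by contradiction. Suppose there exists $v \in SM$ with $\hat{E}^+(v) \cap \hat{V}(v) \neq \{0\}$; pick a nonzero $\xi$ in this intersection and let $J := J_\xi^\perp$. The vertical condition gives $J(0) = 0$, whence $\dot J(0) \neq 0$ (otherwise $J \equiv 0$ and $\xi = 0$), while via the Sasaki-norm identification on $Q^b(v)$ from Section \ref{sec:twistedconnector} the Anosov stable estimate on $\hat{E}^+$ translates to
\[
J(t)^2 + \dot J(t)^2 \;\leq\; d^2 e^{-2ct}\,\|\xi\|^2 \quad \text{for every } t \geq 0,
\]
so both $J(t)$ and $\dot J(t)$ decay exponentially as $t \to +\infty$.

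To extract a contradiction I would pair $\xi$ with a vector in the unstable direction using the symplectic form $\omega^b$ on $Q^b$. Since $\omega^b$ is non-degenerate on the two-dimensional $Q^b(v) = \hat{E}^+(v) \oplus \hat{E}^-(v)$ and vanishes identically on the one-dimensional Lagrangian $\hat{E}^+(v)$, the linear functional $\omega^b(\xi,\cdot)$ restricts to an isomorphism on $\hat{E}^-(v)$; pick $\eta \in \hat{E}^-(v)$ with $\omega^b(\xi,\eta) = 1$ and let $\tilde J := J_\eta^\perp$. By \eqref{eqn:wronskian}, $W(J,\tilde J)(t) = \omega^b(\xi,\eta) = 1$ for all $t$, and evaluating at $t = 0$ with $J(0) = 0$ yields $\dot J(0)\,\tilde J(0) = 1$, so in particular $\tilde J(0) \neq 0$.

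The heart of the argument is to force $W(J,\tilde J) \equiv 1$ into inconsistency by combining the Wronskian identity with the Anosov decay of $J$. I plan to integrate $(J\dot J)' = \dot J^2 - K^{g,b}(\phi_{g,b}^t v)\,J^2$ from $0$ to $+\infty$ and use $J(0) = 0$ together with $J\dot J \to 0$ to obtain
\[
\int_0^\infty \dot J(t)^2\, dt \;=\; \int_0^\infty K^{g,b}(\phi_{g,b}^t v)\, J(t)^2\, dt,
\]
and then couple this with the $\omega^b$-invariance of $\Phi^t$, which in the two-dimensional $Q^b$ forces the stable contraction on $\hat{E}^+$ to be matched up to a bounded factor by the unstable expansion on $\hat{E}^-$, together with the normalization $W(J,\tilde J) \equiv 1$, to extract a strict inequality contradicting the constancy of $W$.

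The main obstacle I anticipate is making this final comparison strict without any sign assumption on $K^{g,b}$: the magnetic curvature may change sign along the orbit, so the integral identity is not itself sign-definite. I expect the cleanest route is through the Riccati equation $\dot u + u^2 + K^{g,b}(\phi_{g,b}^t v) = 0$ satisfied by $u := \dot J/J$ on the intervals where $J \neq 0$; the boundary behavior $u(t) \to +\infty$ as $t \to 0^+$ (from $J(0) = 0$, $\dot J(0) \neq 0$) together with the finite asymptote $u(t) \to -c$ as $t \to +\infty$ dictated by the Anosov stable rate gives enough structure to close the contradiction via a standard Sturm/Riccati comparison argument.
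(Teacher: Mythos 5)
Your steps 1--3 are sound: picking $\xi\in\hat{E}^+(v)\cap\hat{V}(v)$ gives $J(0)=0$ with $\dot J(0)\neq0$ and exponential forward decay of $J$ and $\dot J$, and pairing with $\eta\in\hat{E}^-(v)$ via $\omega^b$ produces $W(J,\tilde J)\equiv 1$. You also correctly identify the obstacle caused by the sign of $K^{g,b}$. However, the plan for closing the argument has a genuine gap, and in fact the data you have assembled cannot, on their own, yield a contradiction. The Riccati asymptotic $u=\dot J/J\to -c$ is not a consequence of the Anosov bound: the estimate $J(t)^2+\dot J(t)^2\le d^2e^{-2ct}\|\xi\|^2$ controls the norm of $\Phi^t\xi$, not the ratio $\dot J/J$, which need not converge. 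Moreover $u$ is defined only where $J\neq 0$, which is almost the conclusion you are after, so the Riccati setup is partly circular --- if $J$ has a further zero, $u$ blows up there and the comparison stops. More fundamentally, exponential forward decay of a single $J$ with $J(0)=0$, together with a paired $\tilde J$ of unit Wronskian that expands forward, is internally consistent as single-orbit data. For a concrete model, take $K^{g,b}\equiv 1$ on $[0,3\pi/4]$ and $K^{g,b}\equiv -1$ elsewhere along the orbit: then $J(t)=\sin t$ on $[0,3\pi/4]$, continued by $\tfrac{\sqrt{2}}{2}e^{-(t-3\pi/4)}$, decays exponentially, never vanishes again, and admits a compatible unstable partner $\tilde J$ with $W(J,\tilde J)\equiv 1$. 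No contradiction arises at the level of a single orbit; the obstruction must come from global structure (compactness of $SM$, uniformity of the Anosov constants over all base points, recurrence of the orbit through the positively curved region), which your sketch never invokes.

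The paper handles this by citing \cite[Lemma 4.1]{dairbekov2007entropy}, which establishes precisely that $V(v)\notin E^+(v)\oplus\R X^b(v)$ for Anosov magnetic/thermostat flows on a closed manifold, and then translates this to the quotient bundle via the Section 2 identifications together with $\R V(v)=\hat{V}(v)$. That external lemma carries the global content your argument is missing; a self-contained proof would have to reproduce it.
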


\begin{proof}
    Let $V : SM \rightarrow Q^b(v)$ be the smooth vector field satisfying $K_v(V(v)) = \i v$ and $d_v\pi(V(v)) = 0$; this is referred to as the \emph{vertical vector field} (see also \cite[Section 4]{merry}). In \cite[Lemma 4.1]{dairbekov2007entropy}, it was shown that $V(v) \notin E^+(v) \oplus \R X^b(v)$, where $E^+$ comes from the Anosov splitting of $\smash{\phi_{g,b}^t} : SM \rightarrow SM$. Using the identification described in Section \ref{sec:twistedconnector}, this implies that $V(v) \notin \hat{E}^+(v)$, and the result follows by observing that $\R V(v) = \hat{V}(v)$.
\end{proof}
With this, we are now able to prove Theorem \ref{thm:conjugate} using an argument similar to the Sturm separation theorem.

\begin{proof}[Proof of Theorem \ref{thm:conjugate}]
    Assume for contradiction that $\smash{\phi_{g,b}^t} : SM \rightarrow SM$ is Anosov and that there is a $v \in SM$ and a perpendicular Jacobi field $J^\perp$ along the magnetic geodesic $\gamma_v$ such that $J^\perp(0) = 0$ and $J^\perp(t_0) = 0$ for some $t_0 \neq 0$. The goal is to show that $\smash{\phi_{g,b}^t} : SM \rightarrow SM$ is not Anosov. Without loss of generality, we may assume that $t_0 > 0$, and by Lemma \ref{lem:conjugate} there is a $T > 0$ so that $J^\perp(0) = 0 = J^\perp(T)$ and $J^\perp(s) \neq 0$ for all $s \in (0,T)$.

    We see that the signs of $(J^\perp)^{\cdot}(0)$ and $(J^\perp)^{\cdot}(T)$ are different. Let $J^{+,\perp}$ be a perpendicular Jacobi field corresponding to a non-trivial vector $\xi \in \hat{E}^+(v)$. Notice that $W(J^{+, \perp}, J^\perp) = J^{+,\perp}(0) (J^\perp)^{\cdot}(0) = J^{+,\perp}(T) (J^\perp)^{\cdot}(T),$ hence the signs of $J^{+,\perp}(0)$ and $J^{+, \perp}(T)$ are different. As a consequence we must have that $J^{+,\perp}$ vanishes on $(0,T)$, contradicting Lemma \ref{lem:conjugate2}. 
\end{proof}

\begin{rem}
We observe that one can also deduce Theorem \ref{thm:conjugate} using \cite[Proposition 2.1.1]{herreros}, \cite[Corollary 1.18]{contreras}, and \cite{paternain1993anosov}. The advantage of the above proof is that we did not have to go through the construction of the Green bundles on the level of $TTM$.
\end{rem}

\section{Green bundles} \label{sec:stableunstable}

Assume now that the magnetic system $(g,b)$ is without conjugate points. The goal of this section is to review the construction of the Green bundles in \cite{IVJ}, and relate them to the stable and unstable bundles from \cite{eberlein}. 

Recall that magnetic geodesics are not flip invariant by the lack of homogeneity described in Section \ref{sec:intro}.
Since many of the arguments in \cite{eberlein} use flip invariance to relate Jacobi fields along $\gamma_v(t)$ to Jacobi fields along $\gamma_{-v}(t)$, we need a way around this problem. Fortunately, as noted in \cite[Lemma 3.2]{herreros}, there exists a correspondence between Jacobi fields and ``flipped'' Jacobi fields. Let $J^\perp$ be a perpendicular Jacobi field along the magnetic geodesic $\gamma_{v}$ with respect to the magnetic system $(g,b)$. Denote by $\eta_{-v}$ the magnetic geodesic for the system $(g,-b)$ given by $\eta_{-v}(t) \coloneqq \gamma_v(-t)$. Notice that $L^\perp(t) \coloneqq J^\perp(-t)$ satisfies the differential equation
\begin{equation*}
(L^\perp)^{\cdot \cdot}(t) + \K^{g,-b}_{-v}(\eta(t), \dot{\eta}(t)) L^\perp(t) = 0,\tag{$\text{J}^\prime$}    
\end{equation*}
so $L^\perp$ is a perpendicular Jacobi field along the magnetic geodesic $\eta_{-v}$ with respect to the magnetic system $(g,-b)$. Thus, changing the sign of the time parameter of a perpendicular Jacobi field not only changes the orientation, but it also changes the magnetic intensity. In particular, this correspondence proves the following. 

\begin{lemma} \label{lem:perpjacobflip}
    If the magnetic system $(g,b)$ is without conjugate points, then $(g,-b)$ is also without conjugate points.
\end{lemma}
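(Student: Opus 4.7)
The plan is to argue by contrapositive and exploit the time-reversal correspondence set up in the paragraph immediately preceding the lemma. That correspondence is symmetric in $b$ and $-b$: applying time reversal to a perpendicular Jacobi field for $(g,b)$ along $\gamma_v$ produces a perpendicular Jacobi field for $(g,-b)$ along $\eta_{-v}$, and applying time reversal once more returns us to the original setting. So the construction can be run in either direction.

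Concretely, I would assume that $(g,-b)$ has conjugate points. Then there is some $w \in SM$, a nonzero $t_0 \in \R$, and a perpendicular Jacobi field $L^\perp$ along the magnetic geodesic $\eta_w$ for $(g,-b)$ with $L^\perp(0) = 0 = L^\perp(t_0)$. Set $v := -w$ and consider the magnetic geodesic $\gamma_v$ for $(g,b)$. By the construction in the text (with the roles of $b$ and $-b$ swapped), the curve $\eta_{-v}(t) = \gamma_v(-t)$ is a magnetic geodesic for $(g,-b)$ with initial velocity $-v = w$, so $\eta_{-v} = \eta_w$. Defining $J^\perp(s) := L^\perp(-s)$, the same computation as in the excerpt shows that $J^\perp$ satisfies \eqref{eqn:jacobi_defn} along $\gamma_v$ for the system $(g,b)$, i.e.\ it is a perpendicular Jacobi field along $\gamma_v$.

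Finally, $J^\perp(0) = L^\perp(0) = 0$ and $J^\perp(-t_0) = L^\perp(t_0) = 0$ with $-t_0 \neq 0$, which contradicts the hypothesis that $(g,b)$ is without conjugate points. Hence $(g,-b)$ has no conjugate points either.

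There is no real obstacle here; the only point that requires a moment of care is checking that the time-reversal correspondence is genuinely involutive, i.e.\ that swapping $b \leftrightarrow -b$ in the computation of the preceding paragraph is valid. This is automatic since the magnetic curvature transforms as $\K^{g,-b}_{-v} = \K^{g,b}_v$ along the reversed geodesic (the term $d_x b(\i v)$ is odd in both $b$ and $v$, and $b^2$ is unchanged), which is exactly what allows the Jacobi equation to be preserved under the substitution $t \mapsto -t$.
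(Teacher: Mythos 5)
Your proof is correct and uses the same time-reversal correspondence that the paper establishes in the paragraph immediately preceding the lemma. The paper leaves the contrapositive argument implicit (``this correspondence proves the following''), and you have simply spelled it out, together with a sound check that the correspondence is involutive via the identity $\K^{g,-b}(x,-v) = \K^{g,b}(x,v)$.
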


Let $J^{\perp, r}$ be the perpendicular Jacobi field satisfying $J^{\perp, r}(0) = 1$ and $J^{\perp, r}(r) = 0$, and let $J^{\perp,z}$ be the perpendicular Jacobi field satisfying $J^{\perp,z}(0) = 0$ and $(J^{\perp,z})^{\cdot}(0) = 1$. Since we are assuming $(g,b)$ is without conjugate points, we have that $J^{\perp, r}$ and $J^{\perp, z}$ are uniquely defined. Similarly, for $\xi \in Q^b(v)$, let $\Psi_v^r(\xi) \in Q^b(v)$ be the unique vector satisfying $d\pi(\Psi_v^r(\xi)) = d\pi(\xi)$ and $d\pi \circ \Phi^r \circ \Psi_v^r(\xi) = 0.$ Using the correspondence in Lemma \ref{lem:correspondence}, we see that $\Psi_v^r(\xi)$ corresponds to the perpendicular Jacobi field $J^{\perp, r}_\xi(t) \coloneqq \langle d\pi(\xi), \i v \rangle J^{\perp, r}(t)$. As before, we will omit the subscript $v$ from $\Psi$ when it is clear from context.

In order to construct the Green bundles, we wish to take a limit of $J^{\perp, r}$ as $r$ tends to $\pm \infty$. In particular, assuming that the limits exist, let $\hat{E}^{\pm}(v)$ be the subbundles in $Q^b(v)$ corresponding to $\R J^{\perp, \pm \infty}$ under the correspondence in Lemma \ref{lem:correspondence}. We see that if the limits exist, then we can define the Green bundles in terms of $\Psi^r$ as $\hat{E}^{\pm}(v) \coloneqq \{ \xi \in Q^b(v) \ \big| \ \lim_{r \rightarrow \pm \infty} \Psi^r(\xi) = \xi\},$
which matches the construction in \cite[Lemma 4.4]{IVJ}. We now show that the limits exist.

\begin{lemma}\label{lem:Jinfty}
We have the following.
\begin{enumerate}[(1).]
\item \label{Jinfty1} The limit $\lim\limits_{r\rightarrow\pm\infty}(J^{\perp,r})^\cdot(0)$ exists.

\item \label{Jinfty2} If $J^{\perp,\pm\infty}$ is the solution to the perpendicular Jacobi equation with initial conditions  $J^{\perp,\pm\infty}(0)=1$ and $(J^{\perp,\pm\infty})^\cdot(0)=\lim\limits_{r\rightarrow\pm\infty}(J^{\perp,r})^\cdot(0)$, then
$J^{\perp,\pm\infty}(t)=\lim\limits_{r\rightarrow\pm\infty} J^{\perp,r}(t)$.

\item \label{Jinfty3} The perpendicular Jacobi field $J^{\perp,\pm\infty}(t)$ never vanishes.   
\end{enumerate}
\end{lemma}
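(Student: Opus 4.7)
The plan is to get part (1) from a monotone convergence argument driven by Wronskian comparisons, to read part (2) off from continuous dependence on initial conditions, and to deduce part (3) from the uniqueness of the two-point boundary value problem afforded by the no-conjugate-points hypothesis. I describe the argument for $r\to+\infty$; the case $r\to-\infty$ follows by an entirely symmetric computation, or alternatively by applying the same argument to the flipped system $(g,-b)$ along $\gamma_{-v}$ as in Lemma \ref{lem:perpjacobflip}.

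The first step is to pin down the signs of $J^{\perp,r}$. If $J^{\perp,r}$ vanished at some $s\in(0,r)$, then the translated perpendicular Jacobi field along $\gamma_{\dot\gamma_v(s)}$ would vanish at both $0$ and $r-s>0$, contradicting the no-conjugate-points hypothesis. Hence $J^{\perp,r}>0$ on $[0,r)$ and $(J^{\perp,r})^\cdot(r)<0$ by transversality, and the analogous picture holds for $r<0$: $J^{\perp,r}>0$ on $(r,0]$ and $(J^{\perp,r})^\cdot(r)>0$.

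The second step handles (1) and (2). Because $J^{\perp,r_1}$ and $J^{\perp,r_2}$ both satisfy the same second-order linear ODE with no first-order term, the Wronskian $W(J^{\perp,r_1},J^{\perp,r_2})(t)=(J^{\perp,r_1})^\cdot(t)J^{\perp,r_2}(t)-(J^{\perp,r_2})^\cdot(t)J^{\perp,r_1}(t)$ is constant. Evaluating at $t=0$ gives the difference $(J^{\perp,r_1})^\cdot(0)-(J^{\perp,r_2})^\cdot(0)$, while evaluating at $t=r_1$ (for $0<r_1<r_2$) gives $(J^{\perp,r_1})^\cdot(r_1)\,J^{\perp,r_2}(r_1)$, which is negative by the sign information from the first step. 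Hence $r\mapsto(J^{\perp,r})^\cdot(0)$ is strictly increasing on $(0,\infty)$. The same computation with $r>0$ and $s<0$, evaluated at $t=r$, shows $(J^{\perp,r})^\cdot(0)<(J^{\perp,s})^\cdot(0)$, so the increasing sequence is uniformly bounded above and (1) follows. For (2), since $J^{\perp,r}(0)=1$ is fixed, writing $J^{\perp,r}(t)=c_1(t)+(J^{\perp,r})^\cdot(0)\,c_2(t)$ for the standard fundamental pair of solutions to $(J^\perp)^{\cdot\cdot}+\K^{g,b}(\gamma_v,\dot\gamma_v)J^\perp=0$, the convergence of $(J^{\perp,r})^\cdot(0)$ gives pointwise (indeed locally uniform) convergence $J^{\perp,r}(t)\to J^{\perp,+\infty}(t)$.

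The main obstacle is (3), and the delicate point is that the lack of flip symmetry for magnetic flows forces me to treat the cases $t_0>0$ and $t_0<0$ separately. Suppose $J^{\perp,+\infty}(t_0)=0$ for some $t_0\neq 0$; by no conjugate points, the perpendicular Jacobi field with prescribed values $1$ at $0$ and $0$ at $t_0$ is unique (any two such differ by a field vanishing at two points, which would contradict no conjugate points after translation), so $J^{\perp,+\infty}=J^{\perp,t_0}$ and their initial derivatives coincide. When $t_0>0$, the strict monotonicity from the previous step gives $(J^{\perp,t_0})^\cdot(0)<(J^{\perp,r})^\cdot(0)\leq(J^{\perp,+\infty})^\cdot(0)$ for any $r>t_0$, a contradiction. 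When $t_0<0$, the direct comparison against the limit is only non-strict, so I would instead use that $s\mapsto(J^{\perp,s})^\cdot(0)$ is strictly increasing on $(-\infty,0)$ (proved by an analogous Wronskian evaluation at $t=s_2$ for $s_1<s_2<0$) together with the uniform upper bound from (1): choosing any $s<t_0<0$ yields $(J^{\perp,+\infty})^\cdot(0)\leq(J^{\perp,s})^\cdot(0)<(J^{\perp,t_0})^\cdot(0)=(J^{\perp,+\infty})^\cdot(0)$, again a contradiction. I expect this sign bookkeeping and the separate handling of the two cases to be the trickiest part of the proof.
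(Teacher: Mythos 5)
Your proof is correct and reaches the same conclusions as the paper's, but it is organized around a single unifying tool --- the constancy of the Wronskian of two perpendicular Jacobi fields --- whereas the paper works with the explicit reduction-of-order formula $J^{\perp,r}(t)=J^{\perp,z}(t)\int_{t}^{r}(J^{\perp,z}(u))^{-2}\,du$. These are two presentations of the same underlying mechanism (the integral identity is itself a consequence of the Wronskian being constant), so your part (1) is essentially a reorganization of the paper's monotone-convergence argument: monotonicity of $r\mapsto(J^{\perp,r})^\cdot(0)$ comes from the sign of the constant Wronskian rather than from the positivity of the integrand, and the upper bound comes from comparing to any $J^{\perp,s}$ with $s<0$ rather than to the specific $J^{\perp,-1}$. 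Part (2) is identical in both. Where the two genuinely diverge is part (3) in the case $t_0<0$: the paper invokes that $s\mapsto(J^{\perp,s})^\cdot(0)$ is a local diffeomorphism near $r_0$ and derives a contradiction via continuity and uniqueness of solutions, whereas you establish strict monotonicity of $s\mapsto(J^{\perp,s})^\cdot(0)$ on $(-\infty,0)$ by one more Wronskian evaluation and then run the same strict-inequality sandwich as in the $t_0>0$ case. Your route avoids the smooth-dependence-as-diffeomorphism step and keeps the whole proof within the Wronskian framework, which makes the two cases of part (3) symmetric; the paper's argument is a little shorter in the $t_0<0$ case but is structurally distinct from its own $t_0>0$ case. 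Both are correct, and the difference is a matter of exposition rather than substance.
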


\begin{proof}
We prove the case where $r$ tends to positive infinity, noting that the other case follows from Lemma \ref{lem:perpjacobflip}. In order to prove \eqref{Jinfty1}, we use the monotone convergence theorem. First, observe that for $t > 0$ we have 
\begin{equation}
\label{eqn:Jperp_r} 
J^{\perp,r}(t)=J^{\perp,z}(t)\int_{t}^{r}\frac{du}{(J^{\perp,z}(u))^2}. \tag{V}
\end{equation}
We can extend the solution to $t\in \mathbb R$ by existence and uniqueness of solutions to differential equations. Next, for any $r,t>0$ and $0<s<r$, observe that we have the relationship
\begin{equation*} \label{eqn:Jperp_r_id} J^{\perp,r}(t)-J^{\perp,s}(t)=J^{\perp,z}(t)\int_{s}^{r}\frac{du}{(J^{\perp,z}(u))^2}. 
\end{equation*}
Taking the derivative at $t=0$ yields 
\begin{equation} \label{eqn:Jperp_r_id2}
(J^{\perp,r})^\cdot(0)-(J^{\perp,s})^\cdot(0)=\int_{s}^{r}\frac{1}{(J^{\perp,z}(u))^2}du. \tag{$\text{V}^\prime$}\end{equation}
Notice that this is an increasing function in $r$, so we are done if we can show it is bounded.

Given $q>0$, we have
\begin{equation}\label{eqn:1}
J^{\perp,-q}(t)=-\frac{J^{\perp,r}(-q)}{J^{\perp,z}(-q)}J^{\perp,z}(t)+J^{\perp,r}(t).\tag{E}
\end{equation}
Existence and uniqueness of differential equations ensures that equality (\ref{eqn:1}) holds regardless of the choice of $r>0$.
We now claim that $(J^{\perp,-1})^\cdot(0)>(J^{\perp,r})^\cdot(0).$
Indeed, we have the following relation by taking the derivative of equality (\ref{eqn:1}):$$(J^{\perp,-1})^\cdot(0)-(J^{\perp,r})^\cdot(0)=-\frac{J^{\perp,r}(-1)}{J^{\perp,z}(-1)}.$$ Thus, it suffices to show ${J^{\perp,r}(-1)}/{J^{\perp,z}(-1)}<0$. Using the fact that the system is without conjugate points along with the definition of $J^{\perp,r}$ and $J^{\perp,z}$,  we have $J^{\perp,r}(-1)>0$ and $J^{\perp,z}(-1)<0$, proving the claim.

Next, we note that \eqref{Jinfty2} follows from \eqref{Jinfty1} by smooth dependence of solutions to the ordinary differential equation.

Finally, we prove \eqref{Jinfty3}. Suppose for contradiction that there is an $r_0 \in \R$ so that $J^{\perp,+\infty}(r_0) = 0$ vanishes. Note that $r_0 \neq 0$ by definition. We now study what happens if $r_0 > 0$ or if $r_0 < 0$.

\begin{enumerate}[\text{Case }1.]
    \item Suppose $r_0 > 0$. By existence and uniqueness, we must have $J^{\perp,+\infty}(t)=J^{\perp,r_0}(t)$. However, for all fixed $t>0$, we have shown that $J^{\perp,r}(t)$ is strictly increasing in $r$, giving us a contradiction.

    \item Suppose $r_0<0$. By existence and uniqueness, we must have $J^{\perp,+\infty}(t)=J^{\perp,r_0}(t)$. 

Let $I=(r_0-\epsilon,r_0+\epsilon)\subset (-\infty,0)$. For $s\in I$, the map $s\mapsto (J^{\perp,s})^\cdot(0)$ is a diffeomorphism by smooth dependence, and existence and uniqueness. However, by definition, 
\[(J^{\perp,r_0})^\cdot(0)=(J^{\perp,+\infty})^\cdot(0)=\lim\limits_{r\rightarrow+\infty}(J^{\perp,r})^\cdot(0).\] 
For some $s\in I\subset (-\infty,0)$ and $r>0$ we must have $(J^{\perp,s})^\cdot(0)=(J^{\perp,r})^\cdot(0)$ by continuity, contradicting existence and uniqueness.\qedhere
\end{enumerate}

\end{proof}

We refer to $\hat{E}^+$ as the \emph{stable Green bundle}, and to $\hat{E}^-$ as the \emph{unstable Green bundle}. The goal now is to show that these bundles are $\Phi^t$-invariant. The first step is the following.

\begin{lemma} For every $v \in SM$, there exists a positive $c = c(v)$ and an $r_0 = r_0(v)$ so that if $\xi\in Q^b(v)$ satisfies $d \pi\circ \Phi^r(\xi)=0$ for some $r\ge r_0$, then $\lVert K(\xi)\rVert\le c\lVert d\pi(\xi)\rVert$.\label{lem:UBK}
\end{lemma}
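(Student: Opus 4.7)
The plan is to convert the claimed bound into one about perpendicular Jacobi fields, using the correspondence in Lemma~\ref{lem:correspondence} together with the formulas $\|K(\xi)\| = |(J_\xi^\perp)^\cdot(0)|$ and $\|d\pi(\xi)\| = |J_\xi^\perp(0)|$ for $\xi \in Q^b(v)$ (both observed explicitly in Section~\ref{sec:twistedconnector}). Under this translation, the hypothesis $d\pi \circ \Phi^r(\xi) = 0$ becomes $J_\xi^\perp(r) = 0$, and the desired inequality becomes $|(J_\xi^\perp)^\cdot(0)| \leq c\,|J_\xi^\perp(0)|$.

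First I would dispose of the degenerate case $J_\xi^\perp(0) = 0$. In that case $J_\xi^\perp$ vanishes at both $0$ and $r$, so the absence of conjugate points forces $J_\xi^\perp \equiv 0$; hence $K(\xi) = 0$ and the inequality holds trivially. Otherwise, by linearity of the perpendicular Jacobi equation I may rescale so that $J_\xi^\perp(0) = 1$, and uniqueness then identifies $J_\xi^\perp$ with the specific perpendicular Jacobi field $J^{\perp, r}$ introduced just above Lemma~\ref{lem:Jinfty}. The lemma thus reduces to showing that $|(J^{\perp, r})^\cdot(0)| \leq c$ uniformly for $r \geq r_0$.

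At this point the bound essentially falls out of the work already done in Lemma~\ref{lem:Jinfty}. Indeed, equation \eqref{eqn:Jperp_r_id2} shows that $r \mapsto (J^{\perp, r})^\cdot(0)$ is strictly increasing on $(0, +\infty)$, and Lemma~\ref{lem:Jinfty}\eqref{Jinfty1} shows that it converges to the finite value $(J^{\perp, +\infty})^\cdot(0)$ as $r \to +\infty$. Fixing any $r_0 > 0$, monotonicity squeezes $(J^{\perp, r})^\cdot(0)$ between $(J^{\perp, r_0})^\cdot(0)$ and $(J^{\perp, +\infty})^\cdot(0)$ for every $r \geq r_0$, and I would take $c(v) \coloneqq \max\bigl\{|(J^{\perp, r_0})^\cdot(0)|,\, |(J^{\perp, +\infty})^\cdot(0)|\bigr\}$.

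I do not anticipate a substantive obstacle; the bulk of the analytic content is already packaged in Lemma~\ref{lem:Jinfty}. The one point worth flagging is why the hypothesis $r \geq r_0 > 0$ is unavoidable: as $r \to 0^+$ the field $J^{\perp, r}$ must drop from $1$ to $0$ over an arbitrarily short interval, and a quick inspection of \eqref{eqn:Jperp_r_id2} using $J^{\perp, z}(u) \sim u$ near $0$ shows $(J^{\perp, r})^\cdot(0) \to -\infty$, so no uniform bound is possible near $0$. The specific value of $r_0$ is immaterial (any positive number works), and the dependence of $c$ on $v$ enters only through the fields $J^{\perp, r_0}$ and $J^{\perp, +\infty}$ along $\gamma_v$.
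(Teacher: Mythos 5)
Your proof is correct and takes essentially the same approach as the paper: translate via the Jacobi-field correspondence to the bound $|(J^{\perp,r})^\cdot(0)| \leq c$ for $r \geq r_0$, and obtain that bound from Lemma~\ref{lem:Jinfty}. The paper simply invokes the convergence $(J^{\perp,r})^\cdot(0) \to (J^{\perp,+\infty})^\cdot(0)$ to pick $r_0$ with $|(J^{\perp,r})^\cdot(0)| \leq |(J^{\perp,+\infty})^\cdot(0)|+1$ for $r \geq r_0$; your use of monotonicity from \eqref{eqn:Jperp_r_id2} to sandwich the quantity is an equivalent route, and your explicit treatment of the degenerate case $J_\xi^\perp(0)=0$ and the remark on why $r_0>0$ is needed are correct added detail rather than a different method.
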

\begin{proof}
By the correspondence between $\xi$ and $J^{\perp,r}_\xi$, we have $\|K(\xi)\| = \|d\pi(\xi)\| \cdot |(J_\xi^{\perp, r})^{\cdot}(0)|$.
By Lemma \ref{lem:Jinfty}, we see that for some $r>r_0$ we have $|(J_\xi^{\perp, r})^\cdot(0)|\le |(J_\xi^{\perp,+\infty})^\cdot(0)|+1$.	
Setting $c \coloneqq 1+|(J_\xi^{\perp,+\infty})^\cdot(0)|$, the result follows.
\end{proof}

With this, we can prove invariance.

\begin{prop}\label{prop:Invariance}
For any $t\in\mathbb{R}$ and $v\in SM$, we have  $\Phi^t(\hat{E}^{\pm}(v))=\hat{E}^{\pm}(\smash{\phi_{g,b}^t}(v))$.  
\end{prop}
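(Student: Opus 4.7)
The plan is to use the correspondence between the quotient bundle and perpendicular Jacobi fields (Lemma \ref{lem:correspondence}) to reduce the invariance statement to a scaling identity for $J^{\perp,\pm\infty}$ under time-shifts. Throughout, write $v_t \coloneqq \smash{\phi_{g,b}^t}(v)$.

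First I would establish the following dictionary: if $\xi \in Q^b(v)$ corresponds to $J_\xi \in \J^{b,0}(v)$, then $\Phi^t(\xi) \in Q^b(v_t)$ corresponds to the Jacobi field $J_{\Phi^t(\xi)} \in \J^{b,0}(v_t)$ whose perpendicular component is the naive time-shift $J_{\Phi^t(\xi)}^\perp(s) = J_\xi^\perp(s+t)$. The verification is short: both sides satisfy the perpendicular Jacobi equation along $\gamma_{v_t}$, and the formulas for $d\pi \circ \Phi^t(\xi)$ and $K \circ \Phi^t(\xi)$ recorded in Section \ref{sec:twistedconnector} show they share the same value and derivative at $s=0$, so uniqueness of ODE solutions forces equality.

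With the dictionary in hand, it suffices to show that the time-shift $s \mapsto J_v^{\perp,+\infty}(s+t)$ is a nonzero scalar multiple of $J_{v_t}^{\perp,+\infty}$, and similarly on the $-\infty$ side. For fixed $r > t$, the function $L_r(s) \coloneqq J_v^{\perp,r}(s+t)$ is a perpendicular Jacobi field along $\gamma_{v_t}$ that vanishes at $s = r-t$, so by uniqueness it must be proportional to $J_{v_t}^{\perp,r-t}$; comparing values at $s=0$ gives $L_r = J_v^{\perp,r}(t) \cdot J_{v_t}^{\perp,r-t}$. Letting $r \to +\infty$ and applying Lemma \ref{lem:Jinfty} factor by factor yields
\[ J_v^{\perp,+\infty}(s+t) = J_v^{\perp,+\infty}(t) \cdot J_{v_t}^{\perp,+\infty}(s), \]
where the proportionality constant $J_v^{\perp,+\infty}(t)$ is nonzero by Lemma \ref{lem:Jinfty}\eqref{Jinfty3}. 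This gives $\Phi^t(\hat{E}^+(v)) \subseteq \hat{E}^+(v_t)$; equality is automatic since $\Phi^t$ is a linear isomorphism and both bundles are one-dimensional. The case of $\hat{E}^-$ is handled symmetrically with $r \to -\infty$, or alternatively by invoking the flipping correspondence behind Lemma \ref{lem:perpjacobflip} to reduce to the $+\infty$ case for the system $(g,-b)$.

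The main obstacle is really the bookkeeping in the first step: untangling what $\Phi^t$ does at the level of Jacobi fields, given that the identification $Q^b(\cdot) \cong \J^{b,0}(\cdot)$ must be redone at each new basepoint and that $J_\xi$, which lies in $\J^{b,0}(v)$, generally fails to lie in $\J^{b,0}(v_t)$ after shifting in time (its top component is no longer zero, as dictated by the first line of \eqref{eqn:jacobi_defn}). Once the identity $J_{\Phi^t(\xi)}^\perp(s) = J_\xi^\perp(s+t)$ is isolated, the remainder is a uniqueness-plus-limit argument that is essentially formal.
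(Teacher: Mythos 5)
Your proof is correct, and it takes a genuinely different route from the paper. The paper works with the characterization $\hat{E}^{+}(v)=\{\xi : \lim_{r\to+\infty}\Psi^{r}(\xi)=\xi\}$ and proves invariance by a triangle-inequality argument: one inserts $\Phi^{t}\circ\Psi^{t+r}(\xi)$, uses continuity of $\Phi^{t}$ to handle the first term, and then invokes Lemma \ref{lem:UBK} to convert the $d\pi$-estimate into a full Sasaki-norm estimate on the difference $\Phi^{t}\circ\Psi^{t+r}(\xi)-\Psi^{r}\circ\Phi^{t}(\xi)$. Your argument instead works at the level of the functions $J^{\perp,\pm\infty}$ themselves and establishes the scaling identity
\[
J_{v}^{\perp,+\infty}(s+t)\;=\;J_{v}^{\perp,+\infty}(t)\cdot J_{v_{t}}^{\perp,+\infty}(s),
\]
obtained by normalizing the shifted finite-horizon field $J_{v}^{\perp,r}(\cdot+t)$ (which vanishes at $s=r-t$), invoking uniqueness to identify it with $J_{v}^{\perp,r}(t)\cdot J_{v_{t}}^{\perp,r-t}$, and passing to the limit termwise via Lemma \ref{lem:Jinfty}. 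This buys you something: the argument sidesteps Lemma \ref{lem:UBK} entirely, the nonvanishing of the proportionality constant is immediate from Lemma \ref{lem:Jinfty}\eqref{Jinfty3}, and the cocycle-type identity makes the invariance visibly a consequence of the autonomous nature of the ODE rather than a convergence calculation. The bookkeeping caveat you flag at the end (that $J_{\xi}(\cdot+t)$ need not lie in $\J^{b,0}(v_{t})$, only its perpendicular component transfers) is exactly the right thing to be careful about, and you resolve it correctly by working purely with perpendicular components.
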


\begin{proof}
    We prove the proposition for $\hat E^{+}(v)$, as the proof for $\hat E^{-}(v)$ is similar. The goal is to show that if $\lim\limits_{s \rightarrow +\infty} \Psi^{s}(\xi) = \xi$, then $\lim\limits_{r \rightarrow +\infty} \Psi(\Phi^t(\xi)) = \Phi^t (\xi)$.
    To that end, let's first observe that we can add and subtract $\Phi^t(\Psi^{t+r}(\xi))$ and use the triangle inequality to get 
    \[ \| \Psi^r(\Phi^t(\xi)) - \Phi^t(\xi)\| \leq \|\Phi^t(\xi - \Psi^{t+r}(\xi))\| + \|\Phi^t(\Psi^{t+r}(\xi)) - \Psi^r(\Phi^t(\xi))\|.\]
    Taking the limit as $r$ tends to $+\infty$ on both sides and using our assumption, we have
    \[ \lim_{r \rightarrow +\infty} \| \Psi^r(\Phi^t(\xi)) - \Phi^t(\xi)\| = \lim_{ r \rightarrow +\infty}\|\Phi^t(\Psi^{t+r}(\xi)) - \Psi^r(\Phi^t(\xi))\|.  \]

    Next, observe that applying Lemma \ref{lem:UBK} yields
    $$\lVert K\circ\Phi^t \circ \Psi^{t+r}(\xi) -K\circ \Psi^{r} \circ \Phi^t(\xi)\rVert\le c\lVert d\pi\circ\Phi^t \circ \Psi^{t+r}(\xi)-d\pi \circ \Psi^{r}\circ \Phi^t(\xi)\rVert.$$
    As a result, if we can prove that 
    \[ \lim_{r \rightarrow +\infty} \lVert d\pi\circ\Phi^t \circ \Psi^{t+r}(\xi)-d\pi \circ \Psi^{r}\circ \Phi^t(\xi)\rVert = 0,\]
    then the result follows. 
    To show this limit is zero, we start by observing that 
    \[d\pi\circ\Phi^t \circ \Psi^{t+r}(\xi)-d\pi \circ \Psi^{r} \circ \Phi^t(\xi) = d\pi \circ \Phi^t \left( \Psi^{t+r}(\xi) - \xi\right). \]
    Since $\lim\limits_{s \rightarrow +\infty} \Psi^s(\xi) = \xi$, we can use continuity of $d\pi$ and $\Phi^t$ for fixed $t$ along with the assumption to get 
    \[\lim_{r \rightarrow \infty} \lVert d\pi\circ\Phi^t \circ \Psi^{t+r}(\xi)-d\pi \circ \Psi^{r}\circ \Phi^t(\xi)\rVert = \lim_{r \rightarrow \infty} \lVert d\pi \circ \Phi^t \left( \Psi^{t+r}(\xi) - \xi\right)\rVert  = 0.\qedhere\]
\end{proof}

We finish this section by noting that the bundles $\hat{E}^\pm$ are Lagrangian with respect to the symplectic form in Section \ref{sec:stableunstable} since they are one-dimensional. Summarizing, we have the following (see also \cite[Lemma 4.4]{IVJ} and \cite[Corollary 2.3 (b)]{contreras}).

\begin{prop} \label{prop:green_bundles}
Let $M$ be a closed, oriented surface. If $(g,b)$ is a magnetic system without conjugate points, then the Green bundles $\hat{E}^{\pm}$ exist and are $\Phi^t$-invariant Lagrangian subbundles of $Q^b$.
\end{prop}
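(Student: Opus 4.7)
The plan is to assemble the proposition as a summary of what has already been proved in this section, together with one short algebraic observation for the Lagrangian property. Existence of the pointwise subspaces $\hat E^{\pm}(v)\subset Q^b(v)$ is immediate from Lemma \ref{lem:Jinfty}: the limit $\lim_{r\to\pm\infty}(J^{\perp,r}_v)^\cdot(0)$ exists and is finite, so the initial conditions $J^{\perp,\pm\infty}_v(0)=1$, $(J^{\perp,\pm\infty}_v)^\cdot(0)=\lim_{r\to\pm\infty}(J^{\perp,r}_v)^\cdot(0)$ specify unique perpendicular Jacobi fields, which through the identification of Lemma \ref{lem:correspondence} pick out vectors $\xi^{\pm}_v\in Q^b(v)$ and the lines $\hat E^{\pm}(v):=\R\xi^{\pm}_v$. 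The $\Phi^t$-invariance is exactly the content of Proposition \ref{prop:Invariance}.

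For the Lagrangian property I would give the usual dimension count. Since $M$ is a surface, $SM$ is three-dimensional and hence the fibers $Q^b(v)=T_vSM/\R X^b(v)$ are two-dimensional. Thus $(Q^b(v),\omega^b_v)$ is a $2$-dimensional symplectic vector space, and any one-dimensional subspace $L\subset Q^b(v)$ is automatically isotropic (skew-symmetry of $\omega^b$ gives $\omega^b(\xi,\xi)=0$) and has dimension $\tfrac12\dim Q^b(v)$, so it is Lagrangian. Applying this to $\hat E^{\pm}(v)$ finishes this part.

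What remains, and what I expect to be the main obstacle, is verifying that the pointwise assignments $v\mapsto \hat E^{\pm}(v)$ glue into continuous (rather than just set-theoretic) subbundles of $Q^b$. Since $\hat E^{+}(v)$ is spanned by the vector $\xi^{+}_v$ determined by $d\pi(\xi^{+}_v)=\i v$ and $K(\xi^{+}_v)=(J^{\perp,+\infty}_v)^\cdot(0)\,\i v$, this reduces to continuity of the scalar map $v\mapsto (J^{\perp,+\infty}_v)^\cdot(0)$; the argument for $\hat E^{-}$ is symmetric via Lemma \ref{lem:perpjacobflip}.

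To establish that last continuity I would exploit the Volterra-type identity \eqref{eqn:Jperp_r_id2}: setting $s=1$ and letting $r\to\infty$ gives
\[(J^{\perp,+\infty}_v)^\cdot(0)=(J^{\perp,1}_v)^\cdot(0)+\int_1^{\infty}\frac{du}{\big(J^{\perp,z}_v(u)\big)^{2}}.\]
Smooth dependence of ODE solutions on initial data makes $v\mapsto (J^{\perp,1}_v)^\cdot(0)$ continuous and $(v,u)\mapsto J^{\perp,z}_v(u)$ jointly continuous, so the task is to pass the limit under the integral. I would do this by dominated convergence, producing a local $L^1([1,\infty))$ majorant for $1/(J^{\perp,z}_v(u))^2$. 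Compactness of $SM$ plus the no-conjugate-points hypothesis should yield such a uniform growth estimate on $J^{\perp,z}_v$: the logarithmic derivative $u\mapsto (J^{\perp,z}_v)^\cdot(u)/J^{\perp,z}_v(u)$ satisfies a Riccati equation with coefficient $\K^{g,b}$ bounded in absolute value by compactness, so comparison with solutions of a constant-coefficient Riccati equation gives uniform growth of $J^{\perp,z}_v$ for $u$ large and $v$ in a neighborhood. Once continuity of the scalar is in hand, continuity of $\hat E^{\pm}$ as subbundles follows immediately.
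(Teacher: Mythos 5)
Your treatment of existence (Lemma~\ref{lem:Jinfty}), invariance (Proposition~\ref{prop:Invariance}), and the Lagrangian property (a one-dimensional subspace of a two-dimensional symplectic space is automatically Lagrangian) is exactly what the paper does: the proposition is stated as a summary of the section, and the paper supplies no argument beyond those three citations.

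Where you part ways with the paper is the continuity discussion, and this is worth flagging on two counts. First, the paper itself never proves continuity of $v\mapsto\hat E^{\pm}(v)$; it uses ``subbundle'' informally and defers to \cite[Lemma 4.4]{IVJ} and \cite[Corollary 2.3(b)]{contreras}. So you are attempting to establish something the paper does not actually claim to verify. Second, and more importantly, your proposed argument has a gap: the Riccati comparison from Section~\ref{sec:comparisons} (Proposition~\ref{prop:comp1}~\eqref{item:prop_comp1_1}) yields only $-k\le u(t)\le k\coth(kt)$, so the lower bound on the logarithmic derivative gives
\[
J^{\perp,z}_v(t)\ \ge\ J^{\perp,z}_v(s)\,e^{-k(t-s)},
\]
which is a bound on how fast $J^{\perp,z}_v$ can \emph{decay}, not a growth estimate. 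Absence of conjugate points guarantees convergence of $\int_1^\infty du/(J^{\perp,z}_v(u))^2$ for each fixed $v$ (as the paper notes after \eqref{eqn:Jperp_r_id2}), but it does not give a rate that is locally uniform in $v$, so the dominated-convergence step has no majorant from this route. Indeed, under the hypotheses of Proposition~\ref{prop:green_bundles} alone, $J^{\perp,z}_v$ need not grow faster than any prescribed rate (only Proposition~\ref{prop:comp4}, which is a pointwise statement, gives divergence). If you want a clean continuity statement, the honest path is to restrict to the Anosov case where $\hat E^\pm$ coincide with the hyperbolic splitting and continuity comes for free, or to invoke the cited results for magnetic flows without conjugate points, rather than to try to manufacture a uniform lower bound on $J^{\perp,z}_v$ from Riccati comparison.
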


\section{Comparison theory} \label{sec:comparisons}

In this section, we use techniques from ordinary differential equations to give estimates between a perpendicular Jacobi field and its derivative.
In order to use comparison theory, we analyze the associated first-order \emph{Riccati equation} along a magnetic geodesic $\gamma$:
\begin{equation} \label{eqn:R}u'(t)+u(t)^2+\mathbb K^{g,b}(\gamma(t), \dot{\gamma}(t))=0.\tag{R}
\end{equation}    
Notice that if $J^\perp(t)$ is a non-vanishing perpendicular Jacobi field along $\gamma$, then $(J^\perp)^\cdot(t)/J^\perp(t)$ is a solution to \eqref{eqn:R}. This gives us a connection between non-vanishing Jacobi fields and solutions to \eqref{eqn:R}.

The following standard theorem will be useful in the upcoming discussion.
\begin{teo}[{{\cite [Theorem 3.5]{lb.ode}}}]\label{1stcomp}
Assume that $f_1(x,y)<f_2(x,y)$ are both continuous on an open region $U\subset \mathbb R^2$. Suppose $y=\phi_i(x)$ for $i \in \{1,2\}$ are solutions to the differential equation $y'=f_i(x,y)$ with initial condition $y(x_0)=y_0$,
where $(x_0,y_0)\in U$ and $x \in (a,b)$. Then we have $\phi_1(x)<\phi_2(x)$ for $x\in(x_0,b)$ and $
    \phi_1(x)>\phi_2(x)$ for $x\in(a,x_0).$
\end{teo}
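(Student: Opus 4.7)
The plan is to study the difference $\psi(x) \coloneqq \phi_2(x) - \phi_1(x)$, which vanishes at $x_0$. Because $\phi_1$ and $\phi_2$ satisfy the same initial condition, we have
\[
\psi'(x_0) = f_2(x_0,y_0) - f_1(x_0,y_0) > 0,
\]
so $\psi$ is strictly increasing at $x_0$. Consequently there exists $\delta > 0$ with $\psi > 0$ on $(x_0, x_0 + \delta)$ and $\psi < 0$ on $(x_0 - \delta, x_0)$. The theorem then reduces to showing that these signs propagate to the full intervals $(x_0, b)$ and $(a, x_0)$.

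For the claim on $(x_0, b)$ I would argue by contradiction. Suppose the zero set $Z \coloneqq \{x \in (x_0, b) : \psi(x) = 0\}$ is nonempty and let $x_1 \coloneqq \inf Z$. The previous paragraph gives $x_1 \ge x_0 + \delta > x_0$, continuity forces $\psi(x_1) = 0$, and writing $y_1 \coloneqq \phi_1(x_1) = \phi_2(x_1)$ we have $(x_1, y_1) \in U$. The hypothesis $f_1 < f_2$ then yields
\[
\psi'(x_1) = f_2(x_1,y_1) - f_1(x_1,y_1) > 0.
\]
On the other hand, the minimality of $x_1$ combined with the intermediate value theorem forces $\psi > 0$ throughout $(x_0, x_1)$: any point where $\psi \le 0$ would, together with $\psi > 0$ just right of $x_0$, produce a zero strictly smaller than $x_1$. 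Combined with $\psi(x_1) = 0$, the left difference quotient at $x_1$ is non-positive, so $\psi'(x_1) \le 0$, contradicting the display above. Hence $Z$ is empty and $\phi_1 < \phi_2$ on $(x_0, b)$.

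The argument on $(a, x_0)$ is entirely symmetric: I take $x_1 \coloneqq \sup\{x \in (a, x_0) : \psi(x) = 0\}$ if that set is nonempty, apply the same continuity and ODE reasoning at $x_1$, and compare the ODE-forced strict inequality $\psi'(x_1) > 0$ against the sign-forced estimate $\psi'(x_1) \le 0$ coming from $\psi < 0$ on $(x_1, x_0)$.

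No single step really deserves to be flagged as the main obstacle; the argument is just the standard first-order comparison proof. The one point requiring care is guaranteeing that the hypothetical extremal zero $x_1$ sits strictly inside the interval so that the ODE evaluation at $x_1$ is legal, and this is precisely what the strict inequality $\psi'(x_0) > 0$ gives us through the initial-neighborhood observation. No regularity of the solutions beyond the $C^1$ already built into being a solution of the ODE is used.
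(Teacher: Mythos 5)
Your argument is correct and matches the standard first-order comparison proof, which is essentially what the paper sketches (in a commented-out block) for the $(x_0,b)$ side before ultimately just citing the result to an ODE textbook. The key steps — noting $\psi'(x_0) > 0$ gives a sign on a one-sided neighborhood, taking the extremal hypothetical zero $x_1$, and playing off the ODE-forced $\psi'(x_1)>0$ against the sign-forced one-sided difference quotient — are all present and correctly justified, and you handle both directions where the paper's sketch only treats one.
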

Since $M$ is a closed surface and $\K^{g,b}$ is continuous, let $k>0$ be such that $\mathbb K^{g,b}(v)>-k^2$ for all $v \in SM$.
We will now compare solutions to the Riccati equation associated with $\mathbb K^{g,b}$ to solutions to the Riccati equation with constant negative magnetic curvature $-k^2$: 
\begin{equation*}\label{eqn:R1}
    v'(t)+v(t)^2-k^2=0. \tag{$\text{R}^\prime$}
\end{equation*}
The solution to \eqref{eqn:R1} is given explicitly by $v(t)=k\coth(kt-c),$
where $c\in\mathbb R$ is an arbitrary constant depending on the initial value. 
By construction, $u'=-u^2+k^2$ and $v'=-v^2-\mathbb K^{g,b}$.
We have $-y^2+k^2>-y^2-\mathbb K^{g,b}$, and we use Theorem \ref{1stcomp} to deduce the following.
\begin{prop}
\label{prop:comp1} 
Let $u(t)$ be a solution of \eqref{eqn:R}.
\begin{enumerate}[(1).]
\item \label{item:prop_comp1_1}If $u(t)$ is defined on $\mathbb{R}^{+}$, then $-k\le u(t)\le k\coth(kt)$ for all $t\in\mathbb R^+$. 
\item \label{item:prop_comp1_2} If $u(t)$ is defined on $\mathbb R$, then $|u(t)|\le k$ for all $t \in \R$.
\end{enumerate}
\end{prop}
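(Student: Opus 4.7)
The plan is to apply Theorem \ref{1stcomp} with $f_1(t,y) \coloneqq -y^2 - \K^{g,b}(\gamma(t),\dot\gamma(t))$ and $f_2(t,y) \coloneqq -y^2 + k^2$, which satisfy $f_1 < f_2$ pointwise because $\K^{g,b} > -k^2$ on $SM$. Solutions of \eqref{eqn:R} are then dominated to the right by solutions of \eqref{eqn:R1} sharing an initial value. The explicit family of solutions of \eqref{eqn:R1} to exploit is $v_c(t) = k\coth(k(t-c))$: restricted to $(c,\infty)$ it decreases from $+\infty$ at $t = c^+$ to $k$ at $t = \infty$, while restricted to $(-\infty,c)$ it increases from $-k$ at $t = -\infty$ to $-\infty$ at $t = c^-$. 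The constants $v \equiv \pm k$ are also solutions.

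For the upper bound in \ref{item:prop_comp1_1}, the comparison solution is $w(t) \coloneqq k\coth(kt)$. If $u(t_0) > w(t_0)$ at some $t_0 > 0$, then because $w$ blows up as $t \to 0^+$ while $u$ remains finite on $(0, t_0]$, the intermediate value theorem produces an $s \in (0, t_0)$ with $u(s) = w(s)$; Theorem \ref{1stcomp} then forces $u(t) < w(t)$ for all $t > s$, contradicting $u(t_0) > w(t_0)$. For the lower bound, if $u(t_0) < -k$ at some $t_0 > 0$, pick $c > t_0$ so that $v_c(t_0) = u(t_0)$ (possible because $c \mapsto v_c(t_0)$ sweeps out $(-\infty, -k)$ as $c$ ranges over $(t_0, \infty)$); Theorem \ref{1stcomp} gives $u < v_c$ on $(t_0, c)$, but $v_c(t) \to -\infty$ as $t \to c^-$, so $u$ cannot remain finite up to the finite time $c$, contradicting that $u$ is defined on all of $\R^+$.

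Part \ref{item:prop_comp1_2} is then deduced from \ref{item:prop_comp1_1} by translation. Given $u$ defined on all of $\R$, for any $t_0 \in \R$ the function $s \mapsto u(t_0 + s)$ solves \eqref{eqn:R} along the translated magnetic geodesic on $\R^+$ (the bound $\K^{g,b} > -k^2$ is uniform on $SM$), so \ref{item:prop_comp1_1} yields $-k \leq u(t) \leq k\coth(k(t-t_0))$ for every $t > t_0$. The lower bound is immediate, and fixing $t$ and letting $t_0 \to -\infty$ sends $\coth(k(t - t_0)) \to 1$, giving $u(t) \leq k$.

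The main obstacle is purely bookkeeping. For each contradiction, one must select the right branch of $v_c$ (above $k$ or below $-k$) so that the explicit blow-up of $v_c$ sits on the side of $t_0$ where $u$ is assumed to be defined, and then invoke the forward or backward direction of Theorem \ref{1stcomp} accordingly. No deeper technical difficulty appears to arise.
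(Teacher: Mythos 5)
Your lower-bound argument for \eqref{item:prop_comp1_1} and your translation argument deducing \eqref{item:prop_comp1_2} from \eqref{item:prop_comp1_1} are both correct (and the latter is a cleaner route than the paper's, which proves \eqref{item:prop_comp1_2} directly and then says \eqref{item:prop_comp1_1} is ``nearly the same''). The gap is in the upper bound of \eqref{item:prop_comp1_1}: you invoke the intermediate value theorem to produce a crossing $u(s)=w(s)$ on the grounds that $w$ blows up as $t\to 0^+$ ``while $u$ remains finite on $(0,t_0]$.'' But $u$ being finite at each point of $(0,t_0]$ is automatic for a function defined on $\R^+$; it does not rule out $u(t)\to+\infty$ as $t\to0^+$, and a priori a solution of \eqref{eqn:R} defined only on $(0,\infty)$ can blow up at $0^+$ (e.g.\ $u(t)=k\coth(kt)$ itself when $\K^{g,b}\equiv -k^2$). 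If $u$ and $w$ both diverge like $1/t$ at the origin, IVT gives you nothing without a finer asymptotic comparison, which you have not supplied. So the crossing point $s$ is not legitimately produced.

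The repair is exactly the device you already use for the lower bound: rather than comparing against the fixed solution $w(t)=k\coth(kt)$, compare against the solution of \eqref{eqn:R1} passing through $(t_0,u(t_0))$. Since $u(t_0)>k\coth(kt_0)>k$, that solution is $v(t)=k\coth(k(t-c))$ with $0<c<kt_0$ (the inequality $u(t_0)>k\coth(kt_0)$ is precisely what forces $c>0$, while $u(t_0)>k$ forces $c<kt_0$), so $v$ blows up at $c/k\in(0,t_0)$. Theorem~\ref{1stcomp} gives $u(t)>v(t)$ for $t<t_0$, hence $u(t)\to+\infty$ as $t\to(c/k)^+$, contradicting that $u$ is defined at the positive time $c/k$. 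This is also what the paper does (via the explicit formula $v(t)=k\coth\bigl(kt-kt_0+\tfrac12\ln\tfrac{k_0+k}{k_0-k}\bigr)$ with $k_0=u(t_0)$).
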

With this, we can improve the bound in Lemma \ref{lem:UBK} for vectors in $\hat{E}^{\pm}(v)$.
\begin{prop}\label{col:comp1}
For all $v\in SM$ and $\xi\in \hat{E}^{\pm}(v)$, we have $\Vert K(\xi)\Vert\le k \Vert d\pi(\xi)\Vert$.
\end{prop}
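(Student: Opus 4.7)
The plan is to reduce the inequality to the bound on Riccati solutions provided by Proposition \ref{prop:comp1}(\ref{item:prop_comp1_2}), by exploiting the fact that Jacobi fields corresponding to the Green bundles never vanish. I will treat $\xi \in \hat{E}^{+}(v)$; the case $\xi \in \hat{E}^{-}(v)$ is entirely symmetric by the flip correspondence (Lemma \ref{lem:perpjacobflip}) that interchanges $J^{\perp,+\infty}$ for $(g,b)$ with $J^{\perp,+\infty}$ for $(g,-b)$.

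First, if $\xi = 0$ the inequality is trivial, so assume $\xi \neq 0$. Under the identification of Lemma \ref{lem:correspondence}, $\xi$ corresponds to a perpendicular Jacobi field $J_\xi^\perp \in \J^{b,0}(v)$ which is a nonzero scalar multiple of $J^{\perp,+\infty}$. By Lemma \ref{lem:Jinfty}(\ref{Jinfty3}), $J^{\perp,+\infty}$ never vanishes, hence $J_\xi^\perp(t) \neq 0$ for every $t \in \R$. Therefore the ratio
\[
u(t) \coloneqq \frac{(J_\xi^\perp)^{\cdot}(t)}{J_\xi^\perp(t)}
\]
is defined for all $t \in \R$ and, as noted in Section \ref{sec:comparisons}, satisfies the Riccati equation \eqref{eqn:R} along $\gamma_v$.

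Since $u$ is globally defined on $\R$, Proposition \ref{prop:comp1}(\ref{item:prop_comp1_2}) gives $|u(t)| \leq k$ for every $t \in \R$; in particular $|u(0)| \leq k$. Recalling from Section \ref{sec:twistedconnector} that $d\pi(\xi) = J_\xi^\perp(0)\,\i v$ and $K(\xi) = (J_\xi^\perp)^{\cdot}(0)\,\i v$, and that $\i v$ is a unit vector, we obtain
\[
\|K(\xi)\| = |(J_\xi^\perp)^{\cdot}(0)| = |u(0)| \cdot |J_\xi^\perp(0)| \leq k\,|J_\xi^\perp(0)| = k\,\|d\pi(\xi)\|,
\]
which is the desired inequality.

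There is no genuine obstacle here: everything has been set up in the preceding lemmas. The only small subtlety is guaranteeing that the Riccati solution $u$ is defined on all of $\R$ (so that part (\ref{item:prop_comp1_2}) of Proposition \ref{prop:comp1} applies, rather than only the one-sided bound in part (\ref{item:prop_comp1_1})), which is exactly what Lemma \ref{lem:Jinfty}(\ref{Jinfty3}) delivers for vectors in the Green bundles.
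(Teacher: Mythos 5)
Your proof is correct and follows essentially the same route as the paper: identify $\xi$ with a nonvanishing multiple of $J^{\perp,\pm\infty}$ via Lemma \ref{lem:correspondence} and Lemma \ref{lem:Jinfty}(\ref{Jinfty3}), form the globally defined Riccati solution $u(t)=(J_\xi^\perp)^\cdot(t)/J_\xi^\perp(t)$, apply Proposition \ref{prop:comp1}(\ref{item:prop_comp1_2}) to get $|u|\le k$, and evaluate at $t=0$. The only cosmetic difference is that you explicitly dispose of the $\xi=0$ case, which the paper leaves implicit.
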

\begin{proof}
Under the identification in Lemma \ref{lem:correspondence}, the perpendicular Jacobi field corresponding to $\xi\in \hat{E}^{\pm}(v)$ is $J^{\perp}_{\xi}(t)=\Vert d\pi\xi\Vert \hspace{0.1em} J^{\perp,\pm\infty}(t)$.
We observe that $J_\xi^{\perp}(t)$ never vanishes by Lemma \ref{lem:Jinfty}. Thus, we have that the function $u(t) \coloneqq (J_\xi^{\perp})^\cdot (t)/J^{\perp}_{\xi}(t)$ 
is a solution to \eqref{eqn:R} that is defined for all $t\in\mathbb{R}$. By Proposition \ref{prop:comp1} \eqref{item:prop_comp1_2}, we have $|u(t)| \le k ,$
which we can rewrite as $|(J_\xi^{\perp})^\cdot (t)|\le k| J^{\perp}_{\xi}(t)|$.
Plugging in $t=0$ yields the desired result. 
\end{proof}

Finally, we note that a perpendicular Jacobi field that vanishes somewhere tends to infinity.

\begin{prop}\label{prop:comp4}
Let $v \in SM$. If $J^\perp(t)$ is a perpendicular Jacobi field along the magnetic geodesic $\gamma_{v}$ such that $J^\perp(0)=0$, then for all $R>0$, 
there exists $T=T(R, v)>0$ such that for all $t\ge T$, we have ${\vert J^\perp(t)\vert\ge R\vert (J^\perp)^\cdot(0)\vert}$.
\end{prop}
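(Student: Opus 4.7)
My plan is to reduce the statement to showing that the distinguished perpendicular Jacobi field $J^{\perp,z}$ from Section \ref{sec:stableunstable} (with $J^{\perp,z}(0)=0$ and $(J^{\perp,z})^\cdot(0)=1$) satisfies $|J^{\perp,z}(t)|\to\infty$ as $t\to\infty$. If $(J^\perp)^\cdot(0) = 0$ then $J^\perp \equiv 0$ and the claim is vacuous; otherwise, by linearity and uniqueness, $J^\perp = (J^\perp)^\cdot(0)\,J^{\perp,z}$, so $|J^\perp(t)|/|(J^\perp)^\cdot(0)| = |J^{\perp,z}(t)|$. Since $(g,b)$ is without conjugate points, $J^{\perp,z}$ does not vanish on $(0,\infty)$, and without loss of generality I take $J^{\perp,z}(t) > 0$ there.

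The next step is to pass to the Riccati quotient $u(t) := (J^{\perp,z})^\cdot(t)/J^{\perp,z}(t)$, which is smooth on $(0,\infty)$ and solves \eqref{eqn:R}. Proposition \ref{prop:comp1} \eqref{item:prop_comp1_1} immediately gives $-k \le u(t) \le k\coth(kt)$ on $(0,\infty)$, and since $\coth$ is decreasing on $\mathbb R^+$, I obtain $|u(t)| \le C := k\coth(k)$ for all $t \ge 1$. As $u = (\ln J^{\perp,z})^\cdot$, this says $\ln J^{\perp,z}$ is $C$-Lipschitz on $[1,\infty)$, so $J^{\perp,z}$ varies by at most the multiplicative factor $e^C$ on any subinterval of length $1$.

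The conclusion is then forced by contradiction. Suppose $J^{\perp,z}(t)$ does not tend to $+\infty$; then there exist $M > 0$ and $t_n \to \infty$ with $J^{\perp,z}(t_n) \le M$, and after passing to a subsequence I may arrange $t_1 \ge 2$ and $t_{n+1} - t_n \ge 2$. The Lipschitz bound then forces $J^{\perp,z}(t) \le e^C M$ on each of the disjoint intervals $[t_n - 1, t_n + 1] \subset [1,\infty)$, whence
\[ \int_1^\infty \frac{du}{(J^{\perp,z}(u))^2} \;\geq\; \sum_n \int_{t_n-1}^{t_n+1}\frac{du}{(e^C M)^2} \;=\; +\infty. \]
This contradicts Lemma \ref{lem:Jinfty} \eqref{Jinfty1}: by identity \eqref{eqn:Jperp_r_id2}, convergence of $(J^{\perp,r})^\cdot(0)$ as $r \to +\infty$ is equivalent to finiteness of $\int_s^\infty du/(J^{\perp,z}(u))^2$ for $s > 0$. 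The only delicate point I anticipate is checking that Proposition \ref{prop:comp1} \eqref{item:prop_comp1_1} applies to $u$, since $u(t) \to +\infty$ as $t \to 0^+$; but the hypothesis there only requires $u$ to be a bona fide solution of \eqref{eqn:R} on $\mathbb R^+$, which holds because the non-vanishing of $J^{\perp,z}$ on $(0,\infty)$ makes $u$ smooth on the open half-line, and the uniform bound $\K^{g,b} > -k^2$ is all the comparison argument uses.
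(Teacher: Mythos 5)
Your proposal is correct, and it follows the same essential route as the paper: reduce to showing $J^{\perp,z}(t)\to+\infty$, then exploit the convergence of $\int_s^\infty du/(J^{\perp,z}(u))^2$, which follows from Lemma \ref{lem:Jinfty}~\eqref{Jinfty1} via \eqref{eqn:Jperp_r_id2}. The difference is that the paper compresses the final implication into a one-line assertion (``This follows from \eqref{eqn:Jperp_r_id2}, since the limit as $r\to+\infty$ exists''), whereas you correctly observe that convergence of $\int_s^\infty du/(J^{\perp,z}(u))^2$ does not \emph{by itself} force $J^{\perp,z}(t)\to\infty$: the integrand of a convergent improper integral need not tend to zero. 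You close this gap by invoking Proposition \ref{prop:comp1}~\eqref{item:prop_comp1_1} to get $|u(t)|=|(J^{\perp,z})^\cdot(t)/J^{\perp,z}(t)|\le k\coth(k)$ for $t\ge 1$, hence a Lipschitz bound on $\ln J^{\perp,z}$ that rules out fast downward excursions, and then running the standard disjoint-interval contradiction. This is a genuine strengthening of the terse write-up: the Riccati-derived regularity is exactly the extra ingredient that makes ``integral converges $\Rightarrow$ integrand tends to $0$'' legitimate here, and your handling of the singularity of $u$ at $t=0^+$ (working on the open half-line and restricting attention to $t\ge 1$, where $\coth$ is bounded) is also correct.
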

\begin{proof}
If $(J^\perp)^\cdot(0)=0$, then the perpendicular Jacobi field is trivial and we are done. By normalizing, we may assume without loss of generality that $(J^\perp)^\cdot(0)=1$, so $J^\perp(t)=J^{\perp,z}(t)$. With this in mind, we only need to prove that ${\vert J^{\perp,z}(t)\vert\ge R\text{ for all }R>0}$.
%
%
This follows from \eqref{eqn:Jperp_r_id2}, since  the limit as $r \rightarrow + \infty$ exists for every $s > 0$.
\end{proof}

\section{Stability criteria} \label{sec:stability}

In this section, we show that the Green bundles $\hat E^{\pm}$ satisfy important properties which imply that these are the correct candidates for the Anosov splitting. The next proposition gives a sufficient condition for a vector in the quotient bundle to be in the (un)stable Green bundle.

\begin{prop}\label{prop:crt1}
Let $v\in SM$ and $\xi\in Q^b(v)$. If there exists an $A>0$ so that $\Vert d\pi\circ \Phi^{\pm t}(\xi)\Vert\le A$ for all $t\ge 0$, then $\xi\in \hat E^{\pm}(v)$.
\end{prop}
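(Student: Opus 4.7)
The plan is to convert the hypothesis and conclusion into statements about the associated perpendicular Jacobi field $J_\xi^\perp$ via Lemma \ref{lem:correspondence}. Using $\|d\pi\circ\Phi^t(\xi)\|=|J_\xi^\perp(t)|$, the assumption in the $+$ case reads $|J_\xi^\perp(t)|\le A$ for all $t\ge 0$, and the goal becomes to show that $J_\xi^\perp$ is a scalar multiple of $J^{\perp,+\infty}$, since $\hat{E}^+(v)$ is by construction spanned by the vector in $Q^b(v)$ corresponding to $J^{\perp,+\infty}$. The $-$ case is then obtained symmetrically, either by a direct analogue using $J^{\perp,-\infty}$ and the growth of $J^{\perp,z}$ as $t\to-\infty$, or by applying the $+$ case after flipping via Lemma \ref{lem:perpjacobflip} to the magnetic system $(g,-b)$ along the reversed geodesic.

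Setting $a:=J_\xi^\perp(0)$ and using that $J^{\perp,+\infty}$ and $J^{\perp,z}$ span the two-dimensional solution space of \eqref{eqn:jacobi_defn} along $\gamma_v$ (with respective initial data $(1,(J^{\perp,+\infty})^\cdot(0))$ and $(0,1)$), I can write
\[ J_\xi^\perp(t)=a\,J^{\perp,+\infty}(t)+c\,J^{\perp,z}(t) \]
for a unique $c\in\R$. Since the vector in $Q^b(v)$ corresponding to $a\,J^{\perp,+\infty}$ already lies in $\hat{E}^+(v)$, the problem reduces to showing $c=0$; this will give $J_\xi^\perp=a\,J^{\perp,+\infty}$, hence $\xi\in\hat{E}^+(v)$.

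To force $c=0$, I divide the decomposition by $J^{\perp,z}(t)$, which is nonzero for $t>0$ by the absence of conjugate points, and let $t\to+\infty$. Passing to the limit $r\to+\infty$ in the identity \eqref{eqn:Jperp_r} yields
\[ \frac{J^{\perp,+\infty}(t)}{J^{\perp,z}(t)}=\int_t^{+\infty}\frac{du}{(J^{\perp,z}(u))^2}, \]
where the convergence of the improper integral is guaranteed by the monotone bound extracted inside the proof of Lemma \ref{lem:Jinfty}\eqref{Jinfty1}, and the resulting tail tends to $0$ as $t\to+\infty$. On the other hand, Proposition \ref{prop:comp4} gives $|J^{\perp,z}(t)|\to+\infty$, while the hypothesis bounds $|J_\xi^\perp(t)|$ by $A$, so the left-hand side $J_\xi^\perp(t)/J^{\perp,z}(t)$ tends to $0$ as well. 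Comparing limits forces $c=0$, which concludes the argument. The only mildly delicate point is that I must extract convergence of $\int^{+\infty}du/(J^{\perp,z}(u))^2$ from the proof of Lemma \ref{lem:Jinfty} rather than from its statement; once that is in hand, everything reduces to a two-line asymptotic comparison.
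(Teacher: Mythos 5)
Your proof is correct and is essentially the paper's argument in a slightly reorganized form: the paper shows $K(\Psi^r(\xi))\to K(\xi)$ by proving the Wronskian coefficient $C(r)=W(J_\xi^\perp,J^{\perp,r})$ tends to $0$, dividing by $J^{\perp,z}(t)$ and using \eqref{eqn:Jperp_r}, boundedness of $|J_\xi^\perp|$, and the growth of $J^{\perp,z}$; you instead decompose $J_\xi^\perp=aJ^{\perp,+\infty}+cJ^{\perp,z}$ and show the coefficient $c=W(J_\xi^\perp,J^{\perp,+\infty})=\lim_r C(r)$ vanishes by the very same estimate. Both versions rest on the identical two ingredients (the integral formula for $J^{\perp,+\infty}/J^{\perp,z}$ and $|J^{\perp,z}(t)|\to\infty$), so the routes are not genuinely different.
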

\begin{proof}
We prove the case for $\xi\in \hat E^{+}(v)$, as the proof for $\hat{E}^-(v)$ is similar. 
Our goal is to show that ${\lim_{r\rightarrow +\infty}\Psi^{r}(\xi)= \xi}$. By definition, $d\pi(\Psi^{r}(\xi))= d\pi(\xi)$ for all $r$, so the result follows once we show \hbox{$\lim_{r \rightarrow + \infty} K(\Psi^{r}(\xi))=  K(\xi)$}.

Suppose that for some $A > 0$ we have $\Vert d\pi\circ \Phi^t(\xi)\Vert\le A$ for all $t\ge0$. By construction, $\Psi^r(\xi)$ can be identified with the perpendicular Jacobi field $J_\xi^\perp(0)J^{\perp,r}(t)$, and hence $K\circ \Psi^r(\xi)=J_\xi^\perp(0)(J^{\perp,r})^\cdot(0)$. 
If we can show that $\lim_{r\rightarrow+\infty}[J_\xi^\perp(0)(J^{\perp,r})^\cdot(0)-(J_\xi^\perp)^\cdot(0)]=0$, then it follows that $\lim_{r \rightarrow +\infty} K(\Psi^r(\xi)) = K(\xi)$.  Notice that the inside of the limit is precisely the Wronskian $W(J_\xi^\perp, J^{\perp,r})$, and thus it suffices to show that $\lim_{r \rightarrow +\infty} W(J^\perp_\xi, J^{\perp, r}) = 0$.


Write $J_{r,\xi}^\perp(t)\coloneqq J_\xi^\perp(0)J^{\perp,r}(t)-J_\xi^\perp(t)$. Since $J_{r,\xi}^\perp(0) = 0$, there exists $C(r) \in \R$ so that $J_{r,\xi}^\perp(t) = C(r) J^{\perp,z}(t)$. Furthermore, $(J_{r,\xi}^\perp)^{\cdot}(0) = W(J_\xi^\perp, J^{\perp, r}) = C(r)$. 
We now use this observation to get a bound on the Wronskian.
Since the magnetic system is without conjugate points, the following holds for every $t > 0$:
\[\lim_{r \rightarrow +\infty} \vert C(r)\vert=\bigg\vert J_\xi^\perp(0)\frac{J^{\perp,{+\infty}}(t)}{J^{\perp,z}(t)}-\frac{J_\xi^\perp(t)}{J^{\perp,z}(t)}\bigg\vert \le \bigg\vert A\int_t^{+\infty}\frac{du}{(J^{\perp,z}(u))^2}\bigg\vert+\bigg\vert\frac{A}{J^{\perp,z}(t)}\bigg\vert.\]
As noted in Proposition \ref{prop:comp1}, we have  $\lim_{t\rightarrow+\infty}J^{\perp,z}(t)=+\infty$. Hence, we can use the fact that $C(r)$ is independent of $t$ along with \eqref{eqn:Jperp_r} and the assumption to get that $\lim_{r \rightarrow +\infty} |C(r)| = 0$.
\end{proof}
The next observation emphasizes the symmetry between the magnetic systems $(g,b)$ and $(g,-b)$. In particular, the correspondence described in Lemma \ref{lem:perpjacobflip} shows that boundedness of a perpendicular Jacobi field is flip-invariant. 
\begin{prop}\label{Flip_invariance_of_Boundedness}
If the magnetic system $(g,b)$ does not admit a non-trivial bounded perpendicular Jacobi field $J^\perp$ on any unit speed magnetic geodesic $\gamma$, then $(g,-b)$ does not admit a non-trivial bounded perpendicular Jacobi field $J^\perp$ on any unit speed magnetic geodesic $\gamma$. 
\end{prop}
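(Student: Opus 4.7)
My plan is to prove the contrapositive and rely entirely on the time-reversal correspondence already established for Lemma \ref{lem:perpjacobflip}. Suppose that $(g,-b)$ admits a non-trivial bounded perpendicular Jacobi field $L^\perp$ along some unit speed magnetic geodesic $\eta$ for the system $(g,-b)$. I want to produce from this data a non-trivial bounded perpendicular Jacobi field for the system $(g,b)$ along some unit speed magnetic geodesic, contradicting the hypothesis on $(g,b)$.

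The mechanism is the observation used to prove Lemma \ref{lem:perpjacobflip}: changing the sign of the time parameter exchanges the magnetic systems $(g,b)$ and $(g,-b)$. Concretely, if $\eta$ is a unit speed magnetic geodesic for $(g,-b)$, then $\gamma(t) \coloneqq \eta(-t)$ is a unit speed magnetic geodesic for $(g,b)$, because negating $t$ in \eqref{eqn:magnetic_defn} flips the sign on both $\dot\gamma$ and on the cross term $b\,\i\dot\gamma$, and hence flips the sign of the magnetic intensity. The same calculation that appears just before Lemma \ref{lem:perpjacobflip} shows that if $L^\perp$ satisfies the perpendicular Jacobi equation along $\eta$ for the system $(g,-b)$, then $J^\perp(t) \coloneqq L^\perp(-t)$ satisfies \eqref{eqn:jacobi_defn} along $\gamma$ for the system $(g,b)$, since $\K^{g,-b}(\eta(-t),\dot\eta(-t)) = \K^{g,b}(\gamma(t),\dot\gamma(t))$.

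Now $J^\perp$ inherits the relevant qualitative properties from $L^\perp$. Non-triviality is immediate since $L^\perp(t_0) \neq 0$ for some $t_0$ forces $J^\perp(-t_0) \neq 0$. For boundedness, if $|L^\perp(t)| \leq C$ for all $t \in \R$, then $|J^\perp(t)| = |L^\perp(-t)| \leq C$ for all $t \in \R$ as well. Thus $J^\perp$ is a non-trivial bounded perpendicular Jacobi field along a unit speed magnetic geodesic for $(g,b)$, contradicting the hypothesis. This establishes the contrapositive and hence the proposition.

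I do not expect any real obstacle here; the entire content lies in the already-verified time-reversal correspondence. The only thing I would want to double-check is that the correspondence $L^\perp(t) \leftrightarrow J^\perp(-t)$ preserves the perpendicular component properly — that is, that the perpendicular direction $\i\dot\eta(t)$ transforms to $\i\dot\gamma(-t)$ in a way compatible with the decomposition into tangential and perpendicular parts. This is exactly what is used implicitly in the discussion preceding Lemma \ref{lem:perpjacobflip}, so it requires no new work.
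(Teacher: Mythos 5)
Your argument is correct and is exactly the (unwritten) proof the paper has in mind: the paper states this proposition as an immediate consequence of the time-reversal correspondence $L^\perp(t) = J^\perp(-t)$ established just before Lemma~\ref{lem:perpjacobflip}, and you simply make explicit that this correspondence is a bijection preserving both non-triviality and boundedness. No gap.
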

The following three propositions aim to give a partial converse to Proposition \ref{prop:crt1}.
We start by observing a technical result.

\begin{prop} \label{prop:crt2}
Consider the function $g(t)\coloneqq\inf  \{ \vert J^{\perp,z}_{v}(t)\vert \ | \ v \in SM\}$. For all $t>0$, we have $g(t)>0$.
\end{prop}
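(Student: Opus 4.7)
The plan is to leverage the compactness of $SM$ together with continuous dependence of solutions of the perpendicular Jacobi equation on the base point $v \in SM$, and then use the without-conjugate-points hypothesis to rule out zeros of the minimizer.

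First, I would fix $t > 0$ and argue that the map $v \mapsto J^{\perp,z}_v(t)$ is continuous on $SM$. This is a standard consequence of smooth (and in particular continuous) dependence of solutions of ODEs on parameters: the coefficient $\mathbb{K}^{g,b}(\gamma_v(s), \dot\gamma_v(s))$ appearing in \eqref{eqn:jacobi_defn} depends smoothly on $v$ (since both the magnetic flow $\phi_{g,b}^s$ and the magnetic curvature $\mathbb{K}^{g,b}$ are smooth), while the initial data $J^{\perp,z}_v(0) = 0$, $(J^{\perp,z}_v)^\cdot(0) = 1$ are independent of $v$. Therefore $v \mapsto |J^{\perp,z}_v(t)|$ is continuous on $SM$.

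Next, since $M$ is a closed surface, $SM$ is compact, so the continuous function $v \mapsto |J^{\perp,z}_v(t)|$ attains its infimum at some point $v_0 \in SM$; that is, $g(t) = |J^{\perp,z}_{v_0}(t)|$. Because $(g,b)$ is without conjugate points (the standing assumption from Section~\ref{sec:stableunstable} onward) and $J^{\perp,z}_{v_0}$ vanishes at $0$, the perpendicular Jacobi field $J^{\perp,z}_{v_0}$ cannot vanish at any other time; in particular $J^{\perp,z}_{v_0}(t) \neq 0$ for our fixed $t > 0$. Hence $g(t) > 0$.

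The proof is short and its only subtlety is verifying continuity of $v \mapsto J^{\perp,z}_v(t)$; no further obstacle should arise. Conceptually, the statement is a uniform-in-$v$ version of the fact that $J^{\perp,z}_v(t) \neq 0$ at each fixed $v$ for $t \neq 0$, and compactness converts the pointwise non-vanishing into a uniform positive lower bound.
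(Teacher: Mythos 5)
Your proof is correct and takes essentially the same approach as the paper: both use compactness of $SM$ and continuous dependence of $J^{\perp,z}_v(t)$ on $v$ to see that the infimum is attained, and then invoke the without-conjugate-points assumption to conclude the minimizer is nonzero. The paper phrases it via a minimizing sequence and subsequence extraction, whereas you invoke the extreme value theorem directly, but the content is identical.
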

\begin{proof}
Fix $t>0$. We can choose a sequence of $v_n \in SM$ so that $\lim_{n\rightarrow\infty}\vert J^{\perp,z}_{v_n}(t)\vert =g(t).$
By compactness of $SM$, this sequence has a convergent subsequence, say $v_{n_k}\rightarrow v'$. Since $(g,b)$ is without conjugate points, we must have $g(t)=\vert J^{\perp,z}_{v'}(t)\vert >0$.
\end{proof}

Using the previous proposition, we can now give a bound on Jacobi fields assuming that $(g,b)$ does not admit a non-trivial bounded perpendicular Jacobi field.

\begin{prop}\label{prop:crt3}
Assume $(g,b)$ does not admit a non-trivial bounded perpendicular Jacobi field $J^\perp$. There exists a constant $A>0$, independent of the choice of magnetic geodesic, such that if $J^\perp$ is a non-trivial perpendicular Jacobi field on a unit speed magnetic geodesic $\gamma$ with $J^\perp(0)=0$, then ${\vert J^\perp(t)\vert\ge A\vert J^\perp(s)\vert\text{ for all }t\ge s\ge 1}$. 
\end{prop}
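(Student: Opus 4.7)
My plan is to argue by contradiction. If no such $A>0$ exists, there are sequences $v_n \in SM$ and $t_n \ge s_n \ge 1$ with $|J^{\perp,z}_{v_n}(t_n)|/|J^{\perp,z}_{v_n}(s_n)| \to 0$. Setting $w_n := \phi_{g,b}^{s_n}(v_n)$ and $r_n := t_n - s_n \ge 0$, I would normalize by
\[
\tilde{J}_n(\rho) := \frac{J^{\perp,z}_{v_n}(s_n+\rho)}{J^{\perp,z}_{v_n}(s_n)},
\]
obtaining a perpendicular Jacobi field along $\gamma_{w_n}$ with $\tilde{J}_n(-s_n)=0$, $\tilde{J}_n(0)=1$, $\tilde{J}_n(r_n)\to 0$, and $\tilde{J}_n>0$ on $(-s_n,\infty)$ by the absence of conjugate points. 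The strategy is to extract a subsequential limit $\tilde{J}$ that either violates the no-conjugate-points hypothesis or furnishes a non-trivial bounded perpendicular Jacobi field, which is forbidden by assumption.

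First I would bound $\tilde{J}_n'(0)$ uniformly. Applying Proposition \ref{prop:comp1}(1) to the Riccati solution associated to $J^{\perp,z}_{v_n}$ along $\gamma_{v_n}$ (which is defined on all of $\mathbb{R}^+$) and evaluating at $t=s_n \ge 1$ gives $\tilde{J}_n'(0) \in [-k,\,k\coth(k)]$. By compactness of $SM$ I pass to a subsequence with $w_n \to w$, $\tilde{J}_n'(0)\to c$, $s_n \to s^* \in [1,\infty]$, and $r_n \to \tau^* \in [0,\infty]$; continuous dependence on initial conditions yields a non-trivial perpendicular Jacobi field $\tilde{J}$ along $\gamma_w$ with $\tilde{J}(0)=1$, $\tilde{J}'(0)=c$, and $\tilde{J}_n \to \tilde{J}$ uniformly on compact subsets of $\mathbb{R}$.

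Next I would split into cases on $(s^*,\tau^*)$. If both are finite, $\tilde{J}$ vanishes at both $-s^* \le -1$ and $\tau^* \ge 0$, giving two distinct zeros of a non-trivial Jacobi field and contradicting the no-conjugate-points hypothesis. If $s^* = \infty$ and $\tau^* < \infty$, positivity of $\tilde{J}_n$ on the expanding intervals $(-s_n,\infty)$ forces $\tilde{J} \ge 0$ on $\mathbb{R}$; then $\tilde{J}(\tau^*)=0$ with $\tau^*>0$ makes $\tau^*$ a local minimum, so $\tilde{J}'(\tau^*)=0$, and uniqueness for the Jacobi equation gives $\tilde{J} \equiv 0$, contradicting $\tilde{J}(0)=1$.

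The hard case is $\tau^* = \infty$, and the main obstacle is that the limit $\tilde{J}$ itself need not be bounded; I would extract boundedness by a further rescaling at the peak. Set $M_n := \sup_{[-s_n,r_n]} \tilde{J}_n \ge 1$, attained at some $\rho^*_n$. If $M_n$ stays bounded, then $\tilde{J}$ is bounded on $[-s^*,\infty)$: when $s^*=\infty$ this directly contradicts the hypothesis, and when $s^*<\infty$ the vanishing $\tilde{J}(-s^*)=0$ forces $\tilde{J}$ to be a nonzero multiple of a shifted $J^{\perp,z}$, whose modulus must tend to infinity by Proposition \ref{prop:comp4}. If $M_n \to \infty$, equicontinuity coming from the Riccati bound combined with uniform convergence of $\tilde{J}_n$ on compacts rules out $\rho^*_n$ staying bounded, so $\rho^*_n \to \infty$; I then rescale $\hat{J}_n(\rho) := \tilde{J}_n(\rho+\rho^*_n)/M_n$, a Jacobi field with $\hat{J}_n(0)=1$, $\hat{J}_n'(0)=0$, and $0 \le \hat{J}_n \le 1$ on $[-s_n-\rho^*_n,\,r_n-\rho^*_n]$. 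A further subsequential limit $\hat{J}$ is then either bounded on all of $\mathbb{R}$ (when $r_n-\rho^*_n \to \infty$), contradicting the hypothesis, or satisfies $\hat{J}(L) = \hat{J}'(L) = 0$ at $L = \lim(r_n-\rho^*_n) < \infty$ (using $\tilde{J}_n(r_n)\to 0$, $M_n\to\infty$, and the uniform Riccati bound on $\tilde{u}_n(r_n)$), forcing $\hat{J}\equiv 0$ by uniqueness and contradicting $\hat{J}(0)=1$. In every case a contradiction arises, establishing the existence of $A$.
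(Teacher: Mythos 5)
Your overall strategy matches the paper's: argue by contradiction, normalize a sequence of failing Jacobi fields, use the uniform Riccati bounds from Proposition~\ref{prop:comp1} to get compactness, and then rule out each limiting configuration using either the no-conjugate-points hypothesis, Proposition~\ref{prop:comp4}, or the forbidden bounded perpendicular Jacobi field. Where you diverge is in the choice of normalization point. The paper lets $u_n$ be the argmax of $|J_n^\perp|$ on $[0,t_n]$ and normalizes there, setting $D_n(t)=J_n^\perp(u_n+t)/|J_n^\perp(u_n)|$. This immediately yields $|D_n(0)|=1$, $|D_n|\le 1$ on the relevant interval, and the derivative bound $|(D_n)^\cdot(0)|\le k\coth(kM)$ after establishing a uniform positive lower bound $M$ on the $u_n$. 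A single four-way case analysis on the limits of $u_n$ and $t_n-u_n$ finishes it. You instead normalize at $s_n$, which gives $\tilde J_n(0)=1$ but no a priori bound on $\tilde J_n$; this forces you into a two-stage argument where the hard case $\tau^*=\infty$ requires you to introduce the peak $\rho^*_n$ and rescale a second time, at which point $\hat J_n$ is essentially the paper's $D_n$ in disguise. So the approaches are not genuinely different; yours just takes the scenic route.

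One genuine gap: when $M_n\to\infty$ you assert that $\rho^*_n$ must tend to $+\infty$. Your argument (uniform convergence on compacts forces $\tilde J_n(\rho^*_n)=M_n$ to stay bounded if $\rho^*_n$ does) only shows $|\rho^*_n|\to\infty$; it does not exclude $\rho^*_n\to-\infty$, which is a priori possible if $s_n\to\infty$ and the Jacobi field has a large hump to the left of $\rho=0$. Your subsequent rescaling tacitly assumes $\rho^*_n\to+\infty$, since you take the left endpoint $-s_n-\rho^*_n$ of the interval to $-\infty$. The omitted case is easy to patch with the same tools: if $\rho^*_n\to-\infty$ then $r_n-\rho^*_n\to+\infty$, and either $-s_n-\rho^*_n\to-\infty$ (giving a bounded perpendicular Jacobi field on all of $\mathbb R$) or $-s_n-\rho^*_n\to-L$ finite (giving a bounded perpendicular Jacobi field on $[-L,\infty)$ vanishing at $-L$, contradicting Proposition~\ref{prop:comp4}), but the gap should be filled rather than waved past. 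This is another place where the paper's peak normalization pays off: there $u_n\ge M>0$ and $t_n-u_n\ge 0$ by construction, so the limits live in $[M,\infty]\times[0,\infty]$ with no sign ambiguity to worry about.
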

\begin{proof}
We prove this by contradiction. Assume that there is a sequence of perpendicular Jacobi fields $J^\perp_n(t)$ on unit speed magnetic geodesics $\gamma_n$ with $J_n^\perp(0)=0$, and a sequence of numbers $s_n$ and $t_n$ with $1\le s_n\le t_n$ such that $\vert J_n^\perp(t_n)\vert\le \frac{1}{n}\vert J_n^\perp(s_n)\vert$. For each $J_n^\perp(t)$, we can choose $u_n$ so that $\vert J_n^\perp(u_n)\vert=\sup_{0\le s\le t_n}\vert J_n^\perp(s)\vert$. 

We claim that the sequence $\{u_n\}$ has a positive lower bound. Indeed, if not, then we can find a subsequence $\{u_{n_k}\}$ which tends to zero as $k$ tends to infinity. After possibly refining the subsequence further so that $J_{n_k}^\perp$ converges to $J^\perp$, we observe that $\lim_{k\rightarrow\infty}\vert J^\perp_{n_k}(u_{n_k})\vert=J^\perp(0)=0.$ By definition,  $\vert J^\perp_{n}(u_n)\vert\ge \vert J^\perp_n(1)\vert\ge g(1).$
However, by Proposition \ref{prop:crt2}, 
$g(1)> 0$,
and thus we have a contradiction. Let $M > 0$ be the lower bound for the sequence $\{u_n\}$.

We now normalize the perpendicular Jacobi fields. For each $n$, let $D_n(t)\coloneqq J_n^\perp(u_n+t)/\vert J_n^\perp(u_n)\vert$ be a Jacobi field along $\gamma_n(u_n+t)$. Notice that $D_n(t)$ satisfies $\vert D_n(-u_n)\vert=0$, $\vert D_n(0)\vert=1,$ and $\vert D_n(t)\vert\le 1$ for all  $t\in[-u_n, t_n-u_n].$ Moreover, we have that
$$\vert D_n(t_n-u_n)\vert=\frac{\vert J_n^\perp(t_n)\vert}{\vert J_n^\perp(u_n)\vert}\le \frac{1}{n}\frac{\vert J_n^\perp(s_n)\vert}{\vert J_n^\perp(u_n)\vert}\le\frac{1}{n}.$$	
Using Proposition \ref{prop:comp1} \eqref{item:prop_comp1_1}, we see that
\vspace{-10pt}
$$\vert (D_n)^\cdot(0)\vert=\frac{\vert (J_n^\perp)^\cdot(u_n)\vert}{\vert J_n^\perp(u_n)\vert}\le k\coth(ku_n)\le  k\coth(k M).$$ 
Using compactness, we may assume that $D_n$ converges to $D$ by passing to a subsequence $D_{n_k}$. By continuity, $\vert D(0)\vert=1$, so $D\ne 0$. We now investigate the limiting behavior of $t_n-u_n$ and $u_n$ along further refinements of the subsequence for which these all of these terms converge.
\begin{enumerate}[$\text{Case }$1.]		
\item Suppose we have $\lim_{k\rightarrow \infty}(t_{n_k}-u_{n_k})= \ell \ge 0$ and $\lim_{k\rightarrow \infty}u_{n_k}=u\ge M$. Using continuity, we have $D(-u)=\lim_{k\rightarrow\infty}D_{n_k}(-u_{n_k})=0$ and $D(\ell)=\lim_{k\rightarrow\infty}D_{n_k}(t_{n_k}-u_{n_k})=0$. Thus we have a pair of conjugate points, which is a contradiction.
\item Suppose we have $\lim_{k\rightarrow \infty}(t_{n_k}-u_{n_k})= \ell \ge 0$ and 
$\lim_{k\rightarrow \infty}u_{n_k}=+\infty$.
Using continuity, we have $D(\ell)=\lim_{k\rightarrow\infty}D_{n_k}(t_{n_k}-u_{n_k})=0$ and $\lim_{t\rightarrow-\infty}D_{n_k}(-u_{n_k})=0$. We also have $\vert D(t)\vert\le 1$ for all $t\le \ell$, because $\vert D_{n_k}(t)\vert\le 1$ for all $t\in[-u_{n_k},t_{n_k}-u_{n_k}]$, and $-u_{n_k}$ tends to $ -\infty$. {However, applying Proposition \ref{prop:comp4} to the reversed Jacobi field,} we must have $\lim_{t\rightarrow-\infty}\vert D(t)\vert=+\infty$, which is a contradiction.
\item Suppose we have $\lim_{k\rightarrow \infty}(t_{n_k}-u_{n_k})=+\infty$ and 
$\lim_{k\rightarrow \infty}u_{n_k}=u\ge M$.
Using continuity, we have $\lim_{t\rightarrow+\infty}D(t)=\lim_{k\rightarrow\infty}D_{n_k}(t_{n_k}-u_{n_k})=0$ and $D(-u)=\lim_{t\rightarrow-\infty}D_{n_k}(-u_{n_k})=0$. We also have $\vert D(t)\vert\le 1$ for all $t\ge -u$ because $\vert D_{n_k}(t)\vert\le 1$ for all $t\in[-u_{n_k},t_{n_k}-u_{n_k}]$, and $t_{n_k}-u_{n_k}$ tends to $+\infty$. {However, applying Proposition \ref{prop:comp4} to the Jacobi field,} we must have $\lim_{t\rightarrow+\infty}\vert D(t)\vert=+\infty$, which is a contradiction.
\item Suppose we have $\lim_{k\rightarrow \infty}(t_{n_k}-u_{n_k})=+\infty$ and $\lim_{k\rightarrow \infty}u_{n_k}=+\infty$. By continuity, we have ${\vert D(t)\vert\le 1}$ for all $ t\in\mathbb{R}$, since ${\vert D_{n_k}(t)\vert\le 1}$ for all $t\in[-u_{n_k},t_{n_k}-u_{n_k}]$, and $-u_{n_k}$ tends to $-\infty$ while $t_{n_k}-u_{n_k}$ tends to $+\infty$. This gives us a bounded perpendicular Jacobi field, which is a contradiction.
\end{enumerate}
Since all of the cases give us a contradiction, the result follows.
\end{proof}

Finally, we have the tools to prove a partial converse to Proposition \ref{prop:crt1}.

\begin{prop}  \label{prop:crt4}
Assume $(g,b)$ does not admit a non-trivial bounded perpendicular Jacobi field. For $v \in SM$, we have that $\xi\in \hat E^{\pm}(v)$ if and only if $\Vert d\pi\circ \Phi^{\pm t}(\xi)\Vert$ is bounded above $\text{for all }t\ge 0$.
\end{prop}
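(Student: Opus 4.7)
The ``if'' direction is exactly Proposition \ref{prop:crt1}, so the content of the proposition is the ``only if'' direction: assuming $\xi\in\hat{E}^{+}(v)$ (the case $\hat E^{-}(v)$ being symmetric), one must show that $\|d\pi\circ\Phi^{t}(\xi)\|$ is bounded for $t\ge 0$. Using Lemma \ref{lem:correspondence}, this amounts to producing a uniform bound on $|J_\xi^{\perp,+\infty}(t)|$ for $t\ge 0$, where $J_\xi^{\perp,+\infty}(t)=J_\xi^{\perp}(0)\,J^{\perp,+\infty}(t)$. It therefore suffices to bound $|J^{\perp,+\infty}(t)|$ on $[0,\infty)$.

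The idea is to transfer the bound from the forward endpoint of $J^{\perp,r}$ (where $J^{\perp,r}(r)=0$) back to the initial endpoint using the time-reversed magnetic geodesic. Concretely, set $\tilde J^{\perp,r}(s)\coloneqq J^{\perp,r}(r-s)$; then $\tilde J^{\perp,r}$ is a perpendicular Jacobi field along the magnetic geodesic $\eta_{-\phi^{r}_{g,b}(v)}$ associated to the flipped system $(g,-b)$, and it satisfies $\tilde J^{\perp,r}(0)=0$ and $\tilde J^{\perp,r}(r)=1$. By Proposition \ref{Flip_invariance_of_Boundedness}, the system $(g,-b)$ inherits from $(g,b)$ the non-existence of non-trivial bounded perpendicular Jacobi fields, so Proposition \ref{prop:crt3} applies to $\tilde J^{\perp,r}$: there exists a constant $A>0$, independent of the magnetic geodesic, such that $|\tilde J^{\perp,r}(t)|\ge A\,|\tilde J^{\perp,r}(s)|$ whenever $t\ge s\ge 1$.

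Taking $t=r$ (which is permissible once $r\ge 1$) yields $1=|\tilde J^{\perp,r}(r)|\ge A\,|\tilde J^{\perp,r}(s)|$ for all $1\le s\le r$, which, after substituting $\tau=r-s$, gives the uniform estimate
\[
|J^{\perp,r}(\tau)|\;\le\;\tfrac{1}{A}\qquad\text{for all } 0\le\tau\le r-1.
\]
Because every fixed $\tau\ge 0$ lies in $[0,r-1]$ for all sufficiently large $r$, passing to the limit using Lemma \ref{lem:Jinfty}\eqref{Jinfty2} gives $|J^{\perp,+\infty}(\tau)|\le 1/A$ for all $\tau\ge 0$. Consequently $\|d\pi\circ\Phi^{t}(\xi)\|\le |J_\xi^{\perp}(0)|/A$ for all $t\ge 0$, as required.

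The $\hat E^{-}(v)$ case proceeds identically: replace $r\to+\infty$ by $r\to-\infty$, reverse time so that $\tilde J^{\perp,r}(s)\coloneqq J^{\perp,r}(r-s)$ becomes a perpendicular Jacobi field along the flipped geodesic vanishing at $s=0$, and apply Propositions \ref{Flip_invariance_of_Boundedness} and \ref{prop:crt3} on the interval $[s,|r|]$ for $s\ge 1$. The main subtlety in the argument is the indexing: the uniform bound supplied by Proposition \ref{prop:crt3} only holds once both arguments exceed $1$, which is why the estimate $|J^{\perp,r}(\tau)|\le 1/A$ is only valid for $\tau\le r-1$. This is harmless for the limit because any fixed $\tau$ is eventually in this range, but it is the step one has to set up carefully.
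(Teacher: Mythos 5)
Your proof is correct and follows essentially the same route as the paper's: both reverse time to turn $J^{\perp,r}$ into a Jacobi field for $(g,-b)$ vanishing at the origin, invoke Proposition~\ref{Flip_invariance_of_Boundedness} to apply Proposition~\ref{prop:crt3}, and pass to the limit. Your version works directly with the normalized fields $J^{\perp,r}$ and Lemma~\ref{lem:Jinfty}\eqref{Jinfty2}, while the paper tracks $\Psi^t(\xi)$ and uses $\Psi^t(\xi)\to\xi$ plus continuity—only a difference in bookkeeping.
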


\begin{proof}
By Proposition \ref{prop:crt1}, it suffices to prove the implication. We prove the case where ${\xi\in \hat E^{+}(v)}$, as the other case is similar. Applying Proposition \ref{prop:crt3}, we get that there exists a constant $A>0$ such that if $J^\perp$ is a non-trivial perpendicular Jacobi field along a unit speed magnetic geodesic $\gamma$ which vanishes at the origin, then ${\vert J^\perp(t)\vert\ge A\vert J^\perp(s)\vert\text{ for all }t\ge s\ge 1}$. Let $J^\perp(u)$ be a perpendicular Jacobi field corresponding to $\Psi^t(\xi)$. As pointed out in the proof of Lemma \ref{lem:perpjacobflip}, the function $L^{\perp}(t)\coloneqq J^\perp(-t)$ is a perpendicular Jacobi field along the magnetic geodesic $\gamma_{-v}$ for the magnetic system $(g,-b)$.  Let $u\ge 0$ be fixed. Notice that for $t\ge 1+u$, we have 
$\Vert d\pi\circ\Phi^u \circ \Psi^t(\xi)\Vert=\vert J^\perp(u)\vert=\vert L^\perp(-u)\vert=\vert L^\perp(-t+(t-u))\vert.$

Now consider the perpendicular Jacobi field $L_{-t}^\perp(s)\coloneqq L^\perp(s-t)$. This is a perpendicular Jacobi field along $\gamma_{-v}(s-t)$ which vanishes at the origin, so $\Vert d\pi\circ\Phi^u \circ \Psi^t(\xi)\Vert=\vert L_{-t}^\perp(t-u)\vert\le \vert L_{-t}^\perp(t)\vert/A.$ 
Working through the definitions, we see that $\vert L_{-t}^\perp(t)\vert = \Vert d\pi(\xi)\Vert$, 
hence
$\Vert d\pi\circ\Phi^u \circ \Psi^t(\xi)\Vert\le \Vert d\pi(\xi)\Vert/A.$
Taking the limit as $t$ tends to $+\infty$, the result follows by continuity.
\end{proof}

\begin{rem} Note that we only use the conclusion of Proposition \ref{prop:crt3} to prove Proposition \ref{prop:crt4}. This observation will be useful in Section \ref{sec:main} for the proof of Corollary \ref{cor:main2}. \label{rem:crt3.5}
\end{rem}

The next two propositions say that, under certain assumptions, a vector in the stable Green bundle is contracted by an exponential factor, and a vector in the unstable Green bundle is expanded by an exponential factor.
\begin{prop}\label{prop:crt5}
Assume $(g,b)$ does not admit a non-trivial bounded perpendicular Jacobi field $J^\perp$ on a unit speed magnetic geodesic $\gamma$. Then 
for all $\varepsilon>0$, there is a $T>0$ so that for all $v \in SM$ and for all $\xi\in \hat E^{\pm}(v)$, we have $\Vert \Phi^{\pm t}(\xi)\Vert\le\varepsilon\Vert \xi\Vert$ as long as $t\ge T$.
\end{prop}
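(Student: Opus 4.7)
The plan is to argue by contradiction, using the uniform forward bound extracted from Proposition \ref{prop:crt4} to construct, in the limit, a non-trivial bounded perpendicular Jacobi field that contradicts the standing hypothesis. I will focus on the case $\xi \in \hat E^+(v)$; the case $\xi \in \hat E^-(v)$ will follow by applying the same argument to the magnetic system $(g,-b)$, whose absence of non-trivial bounded perpendicular Jacobi fields is ensured by Proposition \ref{Flip_invariance_of_Boundedness} and whose stable Green bundle corresponds to $\hat E^-$ of $(g,b)$ via the flip construction in Lemma \ref{lem:perpjacobflip}.

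Suppose, for contradiction, that the conclusion fails: there exist $\varepsilon > 0$, $v_n \in SM$, unit vectors $\xi_n \in \hat E^+(v_n)$, and times $t_n \to +\infty$ with $\|\Phi^{t_n}(\xi_n)\| \ge \varepsilon$. The first step is the observation that the proof of Proposition \ref{prop:crt4} actually produces the explicit uniform estimate $\|d\pi \circ \Phi^t(\eta)\| \le \|d\pi(\eta)\|/A$ for every $\eta \in \hat E^+(w)$, $w \in SM$, and $t \ge 0$, where $A$ is the constant furnished by Proposition \ref{prop:crt3}. Applying this with $\eta = \xi_n$ and using $\|\xi_n\|=1$ gives $\|d\pi \circ \Phi^t(\xi_n)\| \le 1/A$; combining this with Proposition \ref{col:comp1} and the $\Phi^t$-invariance of $\hat E^+$ from Proposition \ref{prop:Invariance} yields $\|K \circ \Phi^t(\xi_n)\| \le k/A$ as well.

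Next, write $w_n := \phi_{g,b}^{t_n}(v_n)$ and let $J_n^\perp$ be the perpendicular Jacobi field along $\gamma_{v_n}$ corresponding to $\xi_n$. The shifted field $\tilde J_n^\perp(s) := J_n^\perp(s + t_n)$ is a perpendicular Jacobi field along $\gamma_{w_n}$, and the bounds above translate to $|\tilde J_n^\perp(0)| \le 1/A$ and $|(\tilde J_n^\perp)^\cdot(0)| \le k/A$. After passing to a subsequence using compactness of $SM$ and boundedness of the initial data, we may assume $w_n \to w_*$ and that both initial conditions converge. Smooth dependence of solutions of \eqref{eqn:jacobi_defn} on initial data and on the base point then supplies a perpendicular Jacobi field $\tilde J^\perp$ along $\gamma_{w_*}$ such that $\tilde J_n^\perp \to \tilde J^\perp$ uniformly on compact subsets of $\R$.

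To close the argument, I will use the lower bound $\|\Phi^{t_n}(\xi_n)\| \ge \varepsilon$, which combined with Proposition \ref{col:comp1} applied to $\Phi^{t_n}(\xi_n) \in \hat E^+(w_n)$ forces $|\tilde J_n^\perp(0)| = \|d\pi \circ \Phi^{t_n}(\xi_n)\| \ge \varepsilon/\sqrt{k^2+1}$, so in the limit $|\tilde J^\perp(0)| \ge \varepsilon/\sqrt{k^2+1} > 0$ and $\tilde J^\perp$ is non-trivial. On the other hand, for any fixed $s \in \R$ and all sufficiently large $n$ we have $s + t_n \ge 0$, and hence $|\tilde J_n^\perp(s)| \le 1/A$; passing to the limit yields $|\tilde J^\perp(s)| \le 1/A$ for every $s \in \R$. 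Thus $\tilde J^\perp$ is a non-trivial bounded perpendicular Jacobi field along $\gamma_{w_*}$, contradicting the hypothesis. The main obstacle I anticipate is correctly leveraging Proposition \ref{prop:crt4} to obtain a forward bound that is uniform in the base point (so that compactness of $SM$ can be applied to extract the limit Jacobi field); once this uniformity is isolated, the rest is an exercise in continuous dependence for ODEs.
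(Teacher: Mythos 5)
Your proposal is correct and follows essentially the same route as the paper: assume the conclusion fails, extract the sequence $v_n$, unit $\xi_n\in\hat E^+(v_n)$, $t_n\to+\infty$ with $\|\Phi^{t_n}(\xi_n)\|\ge\varepsilon$, note that the proof (not just the statement) of Proposition \ref{prop:crt4} gives the uniform forward estimate $\|d\pi\circ\Phi^t(\eta)\|\le\|d\pi(\eta)\|/A$, bootstrap via Proposition \ref{col:comp1} and invariance to bound the full Sasaki norm, pass to a convergent subsequence by compactness, and obtain a non-trivial bounded perpendicular Jacobi field in the limit. You are a bit more explicit than the paper in spelling out the shifted Jacobi fields and the continuous-dependence step (which the paper compresses into the last two sentences, writing $T_vSM$ where it should be the tangent space at the limit point), and you handle $\hat E^-$ via the flip correspondence rather than just asserting "similar" — both reasonable expansions of the same argument.
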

\begin{proof}
We only prove the statement for $\xi\in \hat E^{+}(v)$, since the other case is similar. We assume the conclusion is not true and construct a non-trivial bounded perpendicular Jacobi field explicitly.

If the conclusion is not true, then we can find sequences $t_n$ tending to $+\infty$, $v_n \in SM$, and ${\xi_n\in \hat{E}^{+}(v_n)}$ with $\Vert\xi_n\Vert=1$ so that $\Vert\Phi^{t_n}(\xi_n)\Vert\ge\epsilon$.
Since ${\xi_n\in \hat{E}^{+}(v_n)}$, the perpendicular Jacobi field corresponding to $\xi_n$ never vanishes by Lemma \ref{lem:Jinfty}. Using Proposition \ref{col:comp1},
there exists $\alpha>0$ so that
\[\Vert\Phi^{t_n}(\xi_n)\Vert\le (1+\alpha^2)^{\frac{1}{2}}\left\Vert d\pi\circ\Phi^{t_n}(\xi_n)\right\Vert.\]
Proposition \ref{prop:crt4} then yields 
$$\left\Vert d\pi\circ\Phi^{t_n}(\xi_n)\right\Vert\le \frac{1}{A} \Vert d\pi(\xi_n)\Vert \le \frac{1}{A}\Vert\xi_n\Vert=\frac{1}{A}.$$
Thus, we can find a number $B>0$ so that for every $n$, $\Vert\Phi^{t+t_n}(\xi_n)\Vert\le B$ for $t+t_n \ge 0$.
By compactness, the sequence $\{\Phi^{t_n}(\xi_n)\}$ has a convergent subsequence, say $\{\Phi^{t_{n_k}}(\xi_{n_k}) \}$. Let $\psi \in T_vSM$ be the limit of this subsequence. Continuity then implies that ${\Vert \psi\Vert\ge \epsilon>0}$, which means $\psi$ corresponds to a non-trivial bounded perpendicular Jacobi field,
giving a contradiction.
\end{proof}

\begin{prop} \label{prop:crt6}
Assume that $(g,b)$ does not admit a non-trivial bounded perpendicular Jacobi field on a unit speed magnetic geodesic $\gamma(t)$. Let $h(t)\coloneqq\sup\{ \Vert\Phi^t\xi\Vert \ \big| \ \xi\in \smash{\hat {E}^{+,b}}, \Vert\xi\Vert=1 \}.$ 
Then there exists $c,d>0$ so that for all $t\ge 0$, we have $h(t)\le de^{-ct}$.
\end{prop}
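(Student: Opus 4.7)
The plan is to combine Proposition \ref{prop:crt5} with a submultiplicativity property of $h$, and then derive exponential decay by a standard subadditive-type argument.

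First, I would establish that $h$ is submultiplicative: $h(t+s) \leq h(t) h(s)$. This follows from the invariance of $\hat{E}^+$ under $\Phi^t$ given in Proposition \ref{prop:Invariance}. Indeed, if $\xi \in \hat{E}^+(v)$ with $\|\xi\|=1$, then $\Phi^s(\xi) \in \hat{E}^+(\phi_{g,b}^s(v))$, so writing $\eta \coloneqq \Phi^s(\xi)/\|\Phi^s(\xi)\|$ we get $\|\Phi^{t+s}(\xi)\| = \|\Phi^s(\xi)\|\cdot\|\Phi^t(\eta)\| \leq h(s)h(t)$, and taking the supremum over $\xi$ gives the desired inequality.

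Next, I would check that $h$ is uniformly bounded on $[0,\infty)$. This is essentially free from the earlier results: for $\xi \in \hat{E}^+(v)$, Proposition \ref{col:comp1} gives $\|K(\Phi^t\xi)\| \leq k\|d\pi(\Phi^t\xi)\|$, so $\|\Phi^t\xi\| \leq \sqrt{1+k^2}\,\|d\pi(\Phi^t\xi)\|$. Tracing through the proof of Proposition \ref{prop:crt4}, we have the uniform bound $\|d\pi\circ\Phi^t(\xi)\| \leq \|d\pi(\xi)\|/A \leq \|\xi\|/A$ for all $t \geq 0$, where $A$ is the constant from Proposition \ref{prop:crt3}. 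Therefore $h(t) \leq C \coloneqq \sqrt{1+k^2}/A$ for all $t \geq 0$.

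Now apply Proposition \ref{prop:crt5} with $\varepsilon = 1/2$ to produce $T > 0$ with $h(T) \leq 1/2$. Submultiplicativity then yields $h(nT) \leq (1/2)^n$ for every $n \in \mathbb{N}$. For arbitrary $t \geq 0$, write $t = nT + r$ with $n \in \mathbb{Z}_{\geq 0}$ and $0 \leq r < T$; then
\[ h(t) \leq h(nT)\,h(r) \leq 2^{-n} \cdot C \leq 2C \cdot 2^{-t/T}. \]
Taking $d \coloneqq 2C$ and $c \coloneqq (\ln 2)/T$ gives the desired inequality $h(t) \leq d e^{-ct}$.

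Essentially no step is hard here given what has already been built; the main conceptual point is simply the submultiplicativity, which relies on $\Phi^t$-invariance of $\hat{E}^+$ (Proposition \ref{prop:Invariance}). The uniform boundedness of $h$ on $[0,\infty)$ is the only place one must be a little careful, and it is provided by synthesizing the bounds from Propositions \ref{col:comp1} and \ref{prop:crt4}.
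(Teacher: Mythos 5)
Your proof is correct and follows essentially the same strategy as the paper's: establish submultiplicativity of $h$, show $h$ is bounded on $[0,\infty)$, use Proposition \ref{prop:crt5} to get $h(T)$ small for some $T$, and iterate to obtain exponential decay. Your derivation of the uniform bound on $h$ (synthesizing Propositions \ref{col:comp1} and \ref{prop:crt4}) is slightly more explicit than the paper's, which simply cites Proposition \ref{prop:crt5}, and your final bookkeeping via $t = nT + r$ is a bit cleaner, but these are cosmetic rather than substantive differences.
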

\begin{proof}
First, by Proposition \ref{prop:crt5}, we know $h(t)$ is bounded above for $t \geq 0$ since $\xi\in \smash{\hat {E}^{+,b}}$ and $\Vert\xi\Vert=1$. Next, observe that $\Vert\Phi^{s+t}\xi\Vert\le h(t)\Vert\Phi^{s}\xi\Vert\le h(t)h(s)$. Since this holds for all $\xi$, we have $h(s+t)\le h(s)h(t)$.
Fix $0<\epsilon<\smash{\frac{1}{2}}$, we can find $T>0$ such that for all $t\ge T$, $h(t)<\epsilon$. By iterating, we have $h(nT)\le h(T)^n<\epsilon^n$. For every $t>T$, we can find $n$ so that $nT\le t< \epsilon^{-1}nT$. This implies ${h(t)\le\left(h(t/n)\right)^n<\epsilon^n=e^{nln\epsilon}<e^{(t/T)\epsilon ln\epsilon}}$. Thus, letting $c\coloneqq -\epsilon ln\epsilon/T$, and $d\coloneqq \max\{1, \sup\{ \epsilon^{(t/T)\epsilon}h(t)\ |\ t\in [0,T]\}\}$, we have
$h(t)<de^{-ct}$ for all $t\ge 0$.
\end{proof}

\section{Proof of Theorem \ref{thm:main} and Corollary \ref{cor:main2}} \label{sec:main}

We now have the tools to prove Theorem \ref{thm:main}.

\begin{proof}[Proof of Theorem \ref{thm:main}] The result follows by showing \eqref{thm:1} $\Leftrightarrow$ \eqref{thm:3} and \eqref{thm:2} $\Leftrightarrow$ \eqref{thm:3}.

First, we show \eqref{thm:1} $\Rightarrow$ \eqref{thm:3}. Suppose there is a bounded perpendicular Jacobi field $J^\perp(t)$ whose upper bound is given by ${\sup_{t\in\mathbb R}\vert J^\perp(t)\vert\le C}$ for some $C>0$. 
The perpendicular Jacobi field $J^\perp(t)$ is associated to a vector ${\xi\in Q^b(v)}$ satisfying $\Vert d\pi\circ\Phi^t(\xi)\Vert\le C$ for all $t\in\mathbb R$. Proposition \ref{prop:crt1} implies that ${\xi\in Q^b(v)}$ lives in both ${\hat E^{-}(v)}$ and $\hat E^{+}(v)$.
%
Corollary \ref{col:comp1} then gives us $\Vert K\circ\Phi^t(\xi)\Vert \le k \Vert d\pi\circ\Phi^t(\xi)\Vert,$
and so by definition we have $\Vert \Phi^t(\xi)\Vert\le (1+k^2)^{\frac{1}{2}}C.$

By \eqref{thm:1}, we know that we can write $\xi=\xi^++\xi^-$, with $\xi^+\in \hat E^+$ and $\xi^-\in \hat E^-$. If they are both nonzero, then by letting $t$ tend to $+\infty$, we would have $\Vert\Phi^t(\xi^-)\Vert$ tend to $+\infty$ and $\Vert\Phi^t(\xi^+)\Vert$ tend to $0$. Since this is a bounded perpendicular Jacobi field, $\xi^-$ must be $0$. A similar argument with $t$ tending to $-\infty$ yields that $\xi^+=0$. Thus, the bounded perpendicular Jacobi field must be trivial.

Next, we show \eqref{thm:3} $\Rightarrow$ \eqref{thm:1}. If there does not exist a non-trivial bounded perpendicular Jacobi field, it follows from Proposition \ref{prop:crt6} that there are two numbers $c,d>0$ so that for all $v \in SM$ and $\xi\in \hat E^{\pm}_{v}$, we have $\Vert \Phi^{\pm t}\xi\Vert\le de^{-ct}\Vert\xi\Vert.$ This and Proposition \ref{prop:Invariance} implies that the Green bundles $\hat{E}^{\pm}$ give us the necessary splitting for $\Phi^t$ to be Anosov.

We now show \eqref{thm:2} $\Rightarrow$ \eqref{thm:3}. Suppose there is a bounded perpendicular Jacobi field $J^\perp(t)$. Proposition \ref{prop:crt1} implies that it lives in both $\hat E^{+}(v)$ and $\hat E^{-}(v)$, so if $\hat E^{+}{(v)}\cap \hat E^{-}{(v)}=\{0\}$, then we immediately have $J^\perp(t)=0$.

Finally, we show \eqref{thm:3} $\Rightarrow$ \eqref{thm:2}. By Proposition \ref{prop:crt4}, we know the perpendicular Jacobi field is bounded if and only if $\xi\in \hat E^{+}(v)\cap \hat E^{-}(v)$, so $\hat E^{+}(v)\cap \hat E^{-}(v)=\{0\}$. \qedhere
\end{proof}

With Theorem \ref{thm:main} in hand, we now prove the following intermediate corollary, which will be used to prove Corollary \ref{cor:main2}.

\begin{corollary} \label{cor:main3}
Let $M$ be a closed, oriented surface, and let $(g,b)$ be a magnetic system such that for any unit speed magnetic geodesic $\gamma$, we have $\frac{d}{dt}\vert J^{\perp,z}(t)\vert \ge0$ for all $t>0$. The magnetic flow $\smash{\phi_{g,b}^t} : SM \rightarrow SM$ is Anosov if and only if for every $v \in SM$, \hbox{$\{t \in \R \ | \ K^{g,b}(\gamma_v(t), \dot{\gamma}_v(t)) < 0\} \neq \varnothing$}.
\end{corollary}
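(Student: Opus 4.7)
The plan is to prove the two directions separately; only the backward direction requires the monotonicity hypothesis.

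\textbf{Forward direction.} Assume $\phi_{g,b}^t$ is Anosov and suppose for contradiction that some $\gamma_v$ has $\K^{g,b}(\gamma_v(t),\dot{\gamma}_v(t)) \geq 0$ for all $t$. By Theorem \ref{thm:conjugate} and Proposition \ref{prop:green_bundles} the stable Green bundle exists along $\gamma_v$, and by Lemma \ref{lem:Jinfty} the associated Jacobi field $J^{\perp,+\infty}$ never vanishes. Then $u \coloneqq (J^{\perp,+\infty})^\cdot/J^{\perp,+\infty}$ is a Riccati solution on $\R$ with $u' \leq -u^2$, and comparison with $v' = -v^2$ via Theorem \ref{1stcomp} forces $u \geq 0$ throughout (any $u(t_0) < 0$ would cause blow-down in finite time). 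Hence $J^{\perp,+\infty}$ is nondecreasing, contradicting the exponential decay $\lVert \Phi^t \xi\rVert \to 0$ required by the Anosov property. Note that monotonicity is not used here.

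\textbf{Backward direction.} Assume monotonicity and that every orbit meets $\K^{g,b}<0$. Monotonicity rules out conjugate points: $J^{\perp,z}_v$ is positive and nondecreasing on $(0,\infty)$, and the backward case follows by applying monotonicity along the shifted geodesic $\gamma_{\phi_{g,b}^{t_0}(v)}$. Moreover, any perpendicular Jacobi field with $J^\perp(0)=0$ is a scalar multiple of $J^{\perp,z}_v$, so monotonicity yields the conclusion of Proposition \ref{prop:crt3} with constant $A = 1$. By Remark \ref{rem:crt3.5}, Proposition \ref{prop:crt4} then applies without the hypothesis on bounded Jacobi fields, giving $\xi \in \hat{E}^{\pm}(v)$ if and only if $\lVert d\pi \circ \Phi^{\pm t}\xi\rVert$ is bounded for $t \geq 0$.

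By Corollary \ref{cor:main}, Anosov is equivalent to the absence of a non-trivial bounded perpendicular Jacobi field. Suppose for contradiction such a $J$ exists along $\gamma_v$. Then the strengthened Proposition \ref{prop:crt4} places the corresponding $\xi$ in $\hat{E}^+(v)\cap \hat{E}^-(v)$; one-dimensionality forces $\hat{E}^+ = \hat{E}^-$ along the entire orbit, and consequently $J^{\perp,+\infty}$, $J^{\perp,-\infty}$, and $J$ are mutually proportional. Their common logarithmic derivative $u \coloneqq J^\cdot/J$ is thus the unique bounded Riccati solution on $\R$ along $\gamma_v$. Using the identification $J^{\perp,r}(t) = J^{\perp,z}_{\phi_{g,b}^r(v)}(t-r)/J^{\perp,z}_{\phi_{g,b}^r(v)}(-r)$, the value $u^-(t)$ is a limit $\lim_{r\to-\infty} u^z_{\phi_{g,b}^r(v)}(t-r)$ of quantities that are nonnegative by monotonicity (since $t-r>0$ for $r$ sufficiently negative). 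Hence $u = u^-\geq 0$ on $\R$, so $J$ is nondecreasing and bounded.

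The main obstacle is closing the contradiction. Following the spirit of the Riemannian proof of \cite[Corollary 3.6]{eberlein}, I would integrate the Riccati equation $u' + u^2 + \K^{g,b} = 0$ over $\R$: boundedness of $u$ forces $\int_\R (u^2 + \K^{g,b})\,dt = 0$. Combining this identity with $u \geq 0$, the Wronskian relation $u - u^z_v = -J(0)/(J\cdot J^{\perp,z}_v)$, and the uniqueness of the bounded Riccati solution along $\gamma_v$, one should be able to deduce $u \equiv 0$; then $J$ is constant and the Jacobi equation yields $\K^{g,b} \equiv 0$ along $\gamma_v$, contradicting the hypothesis that $\K^{g,b}$ is negative somewhere on $\gamma_v$. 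Making this last deduction precise is the principal technical point.
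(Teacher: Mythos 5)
Your forward direction is correct and takes a somewhat different (and valid) route than the paper. The paper argues by contrapositive: if $\K^{g,b}\geq 0$ along some $\gamma$, then $\K^{g,b}\geq -k^2$ for every $k>0$, and Proposition \ref{prop:comp1}\eqref{item:prop_comp1_2} forces any globally defined Riccati solution to satisfy $u\equiv 0$; hence $\K^{g,b}\equiv 0$ along $\gamma$, so $J^\perp\equiv 1$ is a bounded perpendicular Jacobi field and Corollary \ref{cor:main} gives non-Anosov. Your version, getting $u^+\geq 0$ from a blow-down argument and then contradicting exponential decay of $\hat E^+$, reaches the same conclusion; note, though, that you should flag that identifying the Green bundle $\hat E^+$ with the actual contracting bundle $\tilde E^+$ of the Anosov splitting needs Proposition \ref{prop:crt1} plus one-dimensionality (any $\xi\in\tilde E^+$ has $\|d\pi\circ\Phi^t\xi\|$ bounded, hence lies in $\hat E^+$, and both are lines). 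You are also right that monotonicity is not needed for this direction, which is cleaner than the paper's framing.

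The backward direction has a genuine gap, and you acknowledge it. Having placed the bounded $\xi$ in $\hat E^+(v)\cap\hat E^-(v)$ and shown $u\geq 0$, you then propose to finish by integrating the Riccati equation over $\R$ and claiming $\int_\R(u^2+\K^{g,b})\,dt=0$. This identity does not follow from boundedness of $u$: integrating $u'=-(u^2+\K^{g,b})$ from $-T$ to $T$ gives $u(T)-u(-T)$, which need not tend to $0$, and in any case you have no control that forces the integral to vanish. The paper closes the argument in a much more elementary way, using precisely the tool you already invoked but did not exploit. With $A=1$ in Proposition \ref{prop:crt3}, the estimate inside the proof of Proposition \ref{prop:crt4} gives $|J^\perp(t)|\leq|J^\perp(0)|$ and $|J^\perp(-t)|\leq|J^\perp(0)|$ for all $t\geq 0$, so $|J^\perp|$ is non-increasing on $t\geq 0$. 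Then one applies the same estimate to the time-shifted Jacobi field $J^\perp_t(s)\coloneqq J^\perp(t+s)$ (still bounded, hence still in $\hat E^+\cap\hat E^-$ of the shifted orbit): taking $s=t$ yields $|J^\perp(0)|\leq|J^\perp(t)|$. Together with the non-increasing bound, $|J^\perp|$ is constant; the flip correspondence extends this to all of $\R$. Once $J^\perp$ is constant, $u\equiv 0$ solves \eqref{eqn:R}, hence $\K^{g,b}\equiv 0$ along $\gamma$, which is the desired contrapositive. In short: you assembled the right ingredients (the strengthened Proposition \ref{prop:crt4} with $A=1$, $\hat E^+\cap\hat E^-\neq 0$), but the decisive step is the shift-by-$t$ trick, not an integral identity.
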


Note that $\K^{g,b} \leq 0$ and \eqref{eqn:jacobi_defn} implies that $\frac{d^2}{dt^2} |J^{\perp,z}(t)| \geq 0$, hence $\frac{d}{dt} |J^{\perp,z}(t)| \geq 0$ (see also \cite[Proposition 3.2]{IVJ}). Thus, Corollary \ref{cor:main2} follows from Corollary \ref{cor:main3}.

\begin{proof}[Proof of Corollary \ref{cor:main3}]
    First, notice that the assumption implies that $(g,b)$ is without conjugate points (see  \cite[Proposition 3.2]{IVJ}).
    Next, given the condition ${\vert J^{\perp,z}(t)\vert \ge \vert J^{\perp,z}(s)\vert}$ for $t\ge s>0$, 
    we can use the conclusion of Proposition \ref{prop:crt4} as mentioned in Remark \ref{rem:crt3.5}. Thus, we know that $\xi\in \hat E^{\pm}(v)$ if and only if $\Vert d\pi\circ \Phi^{\pm t}(\xi)\Vert$ is bounded above $\text{for all }t\ge 0$. The goal is now to use Corollary \ref{cor:main} to get the desired result. First, we start by proving the implication via the contrapositive. Assume that there is a magnetic $\gamma$ along which we have $\K^{g,b}(\gamma(t), \dot{\gamma}(t)) \geq 0$ for all $t$. In particular, this means that $\K^{g,b}(\gamma(t), \dot{\gamma}(t)) \geq -k^2$ for all $t \in \R$ and $k > 0$. For a globally defined solution to \eqref{eqn:R} along $\gamma$, we have that $u \equiv 0$ by  Proposition \ref{prop:comp1} \eqref{item:prop_comp1_2}. This implies that we have $\mathbb K^{g,b}(\gamma(t), \dot{\gamma}(t)) = 0$ for all $t \in \R$, and hence $J^\perp(t) = 1$ is a solution to the perpendicular Jacobi equation in \eqref{eqn:jacobi_defn}. By Corollary \ref{cor:main}, this implies that $\smash{\phi_{g,b}^t}$ is not Anosov. 

    Next, we show that if $J^\perp(t) = 1$ is not a perpendicular Jacobi field for any magnetic geodesic, then $\smash{\phi_{g,b}^t}$ is Anosov. In particular, we will use the assumption of the corollary to show that if $J^\perp(t)$ is bounded, then it must be constant. Suppose $J^\perp(t)$ is bounded, and suppose it corresponds to $\xi \in Q^b(v)$ under the correspondence in Lemma \ref{lem:correspondence}. Using Proposition \ref{prop:crt4}, we have that $\xi \in \hat{E}^+(v) \cap \hat{E}^-(v)$. Moreover, by the proof of Proposition \ref{prop:crt4}, we see that $\max\{|J^\perp(t)|, \vert J^\perp(-t)| \} \leq |J^\perp(0)|$ for all $t \geq 0$. In particular, this implies that $|J^\perp(t)|$ is non-increasing for $t \geq 0$.

    Now fix $t \geq 0$ and consider the function $J^\perp_t(s) \coloneqq J^\perp(t+s)$. This is still a perpendicular Jacobi field which is bounded, so by the argument from the previous paragraph we have $|J^\perp_t(-s)| \leq |J^\perp_t(0)|.$ If we take $s = t$, then this implies that $|J^\perp(0)| \leq |J^\perp(t)|$. Since this holds for all $t \geq 0$, we see that $|J^\perp(t)|$ is constant, and using the correspondence from Lemma \ref{lem:perpjacobflip} we have that it is constant everywhere.

    With this in hand, we prove the converse via the contrapositive. More precisely, we wish to show that if $\smash{\phi_{g,b}^t}$ is not Anosov, then there is a magnetic geodesic $\gamma$ along which ${\K^{g,b}(\gamma(t), \dot{\gamma}(t)) = 0}$. Since the flow is not Anosov, the previous claim implies that $J^\perp(t)$ is a perpendicular Jacobi field for some magnetic geodesic $\gamma$. As described in Section \ref{sec:comparisons}, we have that $u(t) \coloneqq (J^\perp)^{\cdot}(t)/J^\perp(t)$ is a solution to \eqref{eqn:R} along $\gamma$. Since $J^\perp$ is constant, this implies that $u \equiv 0$ is a solution to \eqref{eqn:R} along $\gamma$, which gives the desired result.     \qedhere
\end{proof}

\bibliographystyle{abbrv}
\bibliography{bibliography.bib}
\end{document}